\newcommand{\ep}{\varepsilon}
\newcommand{\la}{\lambda}
\newcommand{\va}{\varphi}
\newcommand{\ppp}{\partial}
\newcommand{\www}{\widetilde}
\newcommand{\pppa}{\partial_t^{\alpha}}
\newcommand{\ddda}{d_t^{\alpha}}
\newcommand{\HOLST}{C^{2+\theta}(\ooo{\OOO})}
\newcommand{\HOLSZ}{C^{\theta}(\ooo{\OOO})}
\newcommand{\CONE}{C^1(\ooo{\OOO})}
\newcommand{\Ahalf}{A_0^{\frac{1}{2}}}
\newcommand{\QQQQ}{\Omega \times (0,T)}
\newcommand{\R}{\mathbb{R}}
\newcommand{\N}{\mathbb{N}}
\newcommand{\ooo}{\overline}
\newcommand{\OOO}{\Omega}
\newcommand{\MLONE}{E_{\alpha,1}}
\newcommand{\MLTWO}{E_{\alpha,\alpha}}
\newcommand{\sumij}{\sum_{i,j=1}^d}
\newcommand{\NUNU}{\ppp_{\nu_A}}
\newcommand{\CC}{{_{0}C^1[0,T]}}
\newcommand{\HH}{H_{\alpha}}
\newcommand{\hhalf}{\frac{1}{2}}
\newcommand{\DDD}{\mathcal{D}}
\newcommand{\sumn}{\sum_{n=1}^{\infty}}
\journalname{Fract. Calc. Appl. Anal.} 
\begin{document}


\title{Comparison principles for the time-fractional diffusion equations with the Robin boundary conditions. Part I: Linear equations}

\titlerunning{Comparison principles for the  time-fractional diffusion equations \dots}

\author{
        Yuri Luchko$^1$ 
\and
        Masahiro Yamamoto$^2$ 
 }

\authorrunning{Yu. Luchko \and M. Yamamoto} 

\institute{Yuri Luchko$^{1,*}$
\at
Department of Mathematics, Physics, and Chemistry, 
Berlin University of Applied Sciences and Technology,
Luxemburger Str. 10, 
Berlin -- 13353, Germany \\
\email{luchko@bht-berlin.de} $^*$ corresponding author 
 \and
Masahiro Yamamto$^{2}$
\at
Department of Mathematical Sciences, 
The University of Tokyo,
Komaba, Meguro, 
Tokyo -- 153, Japan \\
Honorary Member of Academy of Romanian Scientists,
Ilfov, nr. 3, Bucuresti, Romania \\
Correspondence member of Accademia Peloritana dei Pericolanti,
Palazzo Universit\`a, Piazza S. Pugliatti 1,
Messina -- 98122, Italy \\
\email{myama@ms.u-tokyo.ac.jp}
}

\date{Received: XX 2023 / Revised: ....  / Accepted: ......}


\maketitle

\begin{abstract}
{The main objective of this paper is analysis of the initial-boundary value 
problems for the linear  time-fractional diffusion equations
with a uniformly elliptic spatial differential operator of the second order and
the Caputo type time-fractional derivative  acting in the fractional Sobolev 
spaces.
The boundary conditions are formulated in form of the homogeneous
Neumann or Robin conditions. First we discuss the uniqueness and existence of 
solutions to these initial-boundary value problems. 
Under some suitable conditions on the problem data, we then prove positivity 
of the solutions. Based on these results,  several comparison principles for the solutions to the initial-boundary value problems for the linear  time-fractional diffusion equations are derived.}
\keywords{fractional calculus (primary), 
fractional diffusion equation, positivity of solutions,  
comparison principle}

\subclass{35B51 (primary) 35R11 26A33}


\end{abstract} 


\section{Introduction} \label{sec:1}

\setcounter{section}{1}
\setcounter{equation}{0} 

In this paper, we deal with a linear time-fractional 
diffusion equation in the form
$$
 \pppa (u(x,t)-a(x)) = \sum_{i,j=1}^d \ppp_i(a_{ij}(x)\ppp_j u(x,t))
$$
\begin{equation}
\label{(1.1)}
+ \sum_{j=1}^d b_j(x,t)\ppp_ju(x,t) + c(x,t)u(x,t) + F(x,t),\quad 
 x \in \Omega,\, 0<t<T,             
\end{equation}
where $\pppa$ is the Caputo fractional derivative of order 
$\alpha\in (0,1)$ defined on the fractional Sobolev spaces 
(see Section \ref{sec2} for the details) and $\OOO \subset \R^d, \ d=1,2,3$ is
a bounded domain with a smooth boundary $\ppp\OOO$.   
All the functions under consideration are supposed to be real-valued.

In what follows, we always assume that the following conditions are satisfied:
\begin{equation}
\label{(1.2)}
\left\{ \begin{array}{rl}
& a_{ij} = a_{ji} \in C^1(\ooo{\OOO}), \quad 1\le i,j \le d, \\
& b_j,\, c \in C^1([0,T]; C^1(\ooo{\OOO})) \cap C([0,T];C^2(\ooo{\OOO})),
  \quad 1\le j \le d, \\
& \mbox{and there exists a constant $\kappa>0$ such that}\\
& \sumij a_{ij}(x)\xi_i\xi_j \ge \kappa \sum_{j=1}^d \xi_j^2, \quad
x\in \OOO, \, \xi_1, ..., \xi_d \in \R.
\end{array}\right.
\end{equation}

Using the notations $\ppp_j = \frac{\ppp}{\ppp x_j}$, $j=1, 2, ..., d$, 
we define a conormal derivative
$\NUNU w$ with respect to the differential operator 
$\sumij \ppp_j(a_{ij}\ppp_i)$ by 
\begin{equation}\label{(1.3)}
\NUNU w(x) =  \sumij a_{ij}(x)\ppp_jw(x)\nu_i(x), \quad x\in \ppp\OOO,
\end{equation}
where $\nu = \nu(x) =: (\nu_1(x), ..., \nu_d(x))$ is the unit outward normal
vector to $\ppp\OOO$ at the point $x := (x_1,..., x_d) \in \ppp\OOO$. 

For the equation \eqref{(1.1)},  we consider the initial-boundary value 
problems with the homogeneous Neumann boundary condition
\begin{equation}
\label{(1.3a)}
\ppp_{\nu_A}u = 0 \quad \mbox{on $\ppp\OOO \times (0,T)$}   
\end{equation}
or the more general homogeneous Robin boundary condition
\begin{equation}
\label{(1.4)}
\ppp_{\nu_A}u + \sigma(x)u = 0 \quad \mbox{on $\ppp\OOO \times (0,T)$},   
\end{equation}
where  $\sigma$ is a sufficiently smooth function on $\ppp\OOO$ that 
satisfies the condition $\sigma(x) \ge 0,\ x\in \ppp\OOO $.

For partial differential equations of the parabolic type 
that correspond to the case $\alpha=1$ in the equation \eqref{(1.1)},   
several 
important qualitative properties of solutions to the corresponding initial-boundary 
value problems are known.  In particular,
we mention a maximum principle and a comparison principle for the solutions to these problems (\cite{PW}, \cite{RR}).

The main purpose of this paper is the comparison principles 
for the linear time-fractional  
diffusion equation \eqref{(1.1)} with the Neumann or the 
Robin boundary conditions.

For the equations of type \eqref{(1.1)} 
with the Dirichlet boundary conditions,  the maximum principles in different formulations  were  derived and 
used in \cite{Bor,Lu1,luchko-1,luchko-2,Lu2,LY1,LY2,LY3,Za}. 
For a maximum principle for the time-fractional transport equations
we refer to \cite{LSY}. In \cite{Kir}, a maximum principle for the more general 
space- and time-space-fractional partial differential equations has been 
derived.

Because any maximum principle involves the Dirichlet boundary values, its 
formulation in the case of the Neumann or Robin boundary 
conditions requires more cares. 
For this kind of the boundary conditions, both positivity of solutions 
and the comparison 
principles can be derived under some suitable restrictions on the problem 
data. One typical result of this sort says
that the solution $u$ to the equation \eqref{(1.1)} with the boundary 
condition \eqref{(1.3a)} or \eqref{(1.4)} 
and an appropriately formulated initial condition is 
non-negative in $\OOO\times (0,T)$ 
if the initial value $a$ and the non-homogeneous term $F$ are non-negative
in $\OOO$ and in $\QQQQ$, respectively.
Such positivity properties and their applications have been intensively discussed  
and used for the partial differential equations of parabolic type ($\alpha=1$ 
in the equation \eqref{(1.1)}), see, e.g., \cite{E}, 
 \cite{Fr}, \cite{Pao2}, or \cite{RR}. 

However, to the best knowledge of the authors, no results of this kind have been published for 
the time-fractional diffusion equations in the case of the Neumann or Robin 
boundary conditions. The main subject of this paper is in derivation of 
a positivity property and the comparison principles for the linear equation 
\eqref{(1.1)} with the boundary condition
\eqref{(1.3a)} or \eqref{(1.4)} and an appropriately formulated initial 
condition. In the subsequent 
paper, these result will be extended to the case of the semilinear 
time-fractional diffusion 
equations.  The arguments employed in these papers 
rely on an operator theoretical approach to the fractional integrals and 
derivatives in the fractional Sobolev spaces that is an extension of the 
theory well-known in the case $\alpha=1$, see, e.g., \cite{He}, \cite{Pa}, \cite{Ta}. We also refer to the recent publications \cite{Al-R} and \cite{Lu} devoted to the comparison principles for solutions to the fractional differential inequalities with the general fractional derivatives and for solutions to the ordinary fractional differential equations, respectively.

The rest of this paper is organized as follows. In Section \ref{sec2},  
some important results regarding the unique existence of solutions 
to the initial-boundary value problems for the linear time-fractional  
diffusion equations are presented. Section \ref{sec3} is devoted to a proof 
of a key lemma that is a basis for the proofs of the comparison principles for the linear and semilinear time-fractional
diffusion equations. 
The lemma asserts that each solution to \eqref{(1.1)} is non-negative 
in $\OOO \times (0,T)$ if $a\ge 0$ and $F \ge 0$, provided that 
$u$ is assumed to satisfy some extra regularity.
In Section \ref{sec4},  we prove a comparison principle that is our
main result for the problem \eqref{(1.1)} for the linear time-fractional 
diffusion equation. Moreover, we establish the order-preserving properties
for other problem data (the zeroth-order coefficient $c$ of the equation and the coefficient $\sigma$ of the Robin condition). 
Finally, a detailed proof of an important auxiliary statement is presented in an Appendix. 

\section{Well-posedness results} 
\label{sec2}

\setcounter{section}{2}
\setcounter{equation}{0}

For $x \in \Omega, \thinspace 0<t<T$,  we define an operator
\begin{equation}
\label{(2.1)}
-Av(x,t) := \sum_{i,j=1}^d \ppp_i(a_{ij}(x)\ppp_j v(x,t)
+ \sum_{j=1}^d b_j(x,t)\ppp_jv(x,t) + c(x,t)v(x,t)     
\end{equation}
and assume that the conditions \eqref{(1.2)} for the coefficients 
$a_{ij}, b_j, c$ are satisfied. 
 
In this section, we deal with the following initial-boundary value problem for 
the linear  time-fractional diffusion equation \eqref{(1.1)} with the time-fractional derivative 
of order $\alpha\in (0,1)$
\begin{equation}
\label{(2.2)}
\left\{ \begin{array}{rl}
& \pppa (u(x,t)-a(x)) + Au(x,t) = F(x,t), 
\quad x \in \Omega, \thinspace 0<t<T, \\
& \NUNU u + \sigma(x)u(x,t) = 0, \quad x\in \ppp\OOO, \, 0<t<T,
\end{array}\right.
\end{equation}
along with the initial condition \eqref{incon} formulated below. 

To appropriately define the  Caputo fractional derivative $\ddda w(t)$, $0<\alpha<1$, 
we start with its definition on the space
$$
\CC := \{ u \in C^1[0,T];\thinspace u(0) = 0\}
$$
that reads as follows:  
$$
\ddda w(t) = \frac{1}{\Gamma(1-\alpha)}\int^t_0
(t-s)^{-\alpha}\frac{dw}{ds}(s) ds,\ w\in 
\CC.
$$
Then we extend this operator from the domain $\mathcal{D}(\ddda)
:= \CC$ to $L^2(0,T)$ taking into account its closability 
(\cite{Yo}).  As have been shown in \cite{KRY}, there exists a unique minimum 
closed 
extension of $\ddda$ with the domain $\mathcal{D}(\ddda) = \CC$.
Moreover, the domain of this 
extension is the closure of $\CC$ in the Sobolev-Slobodeckij space 
$H^{\alpha}(0,T)$.  Let us recall that the norm $\Vert \cdot\Vert_{H^{\alpha}
(0,T)}$ of the Sobolev-Slobodeckij space 
$H^{\alpha}(0,T)$ is defined as follows (\cite{Ad}):
$$
\Vert v\Vert_{H^{\alpha}(0,T)}:=
\left( \Vert v\Vert^2_{L^2(0,T)}
+ \int^T_0\int^T_0 \frac{\vert v(t)-v(s)\vert^2}{\vert t-s\vert^{1+2\alpha}}
dtds \right)^{\hhalf}.
$$
By setting 
$$
H_{\alpha}(0,T):= \ooo{\CC}^{H^{\alpha}(0,T)},
$$
we obtain (\cite{KRY})
$$
H_{\alpha}(0,T) = 
\left\{ \begin{array}{rl}
&H^{\alpha}(0,T), \quad  0<\alpha<\hhalf, \\
&\left\{ v \in H^{\hhalf}(0,T);\, \int^T_0 \frac{\vert v(t)\vert^2}{t}
dt < \infty \right\}, \quad  \alpha=\hhalf, \\
& \{ v \in H^{\alpha}(0,T);\, v(0) = 0\}, \quad \hhalf < \alpha < 1,
\end{array}\right.
$$
and 
$$
\Vert v\Vert_{H_{\alpha}(0,T)} = 
\left\{ \begin{array}{rl}
&\Vert v\Vert_{H^{\alpha}(0,T)}, \quad  \alpha \ne \hhalf, \\
&\left( \Vert v\Vert_{H^{\hhalf}(0,T)}^2
+ \int^T_0 \frac{\vert v(t)\vert^2}{t}dt\right)^{\hhalf}, \quad
\alpha=\hhalf.
\end{array}\right.
$$
In what follows, we also use the Riemann-Liouville fractional
integral operator $J^{\beta}$, $\beta > 0$ defined by 
$$
(J^{\beta}f)(t) := \frac{1}{\Gamma(\beta)}\int^t_0 (t-s)^{\beta-1}f(s) ds,
\quad 0<t<T.
$$
Then, according to \cite{GLY} and \cite{KRY}, 
$$
H_{\alpha}(0,T) = J^{\alpha}L^2(0,T),\quad 0<\alpha<1.
$$
Next we define 
$$
\pppa = (J^{\alpha})^{-1} \quad \mbox{with $\mathcal{D}(\pppa)
= H_{\alpha}(0,T)$}.
$$
As have been shown in \cite{GLY} and \cite{KRY}, there exists a constant $C>0$ depending only on $\alpha$ such that 
$$
C^{-1}\Vert v\Vert_{H_{\alpha}(0,T)} \le \Vert \pppa v\Vert_{L^2(0,T)}
\le C\Vert v\Vert_{H_{\alpha}(0,T)} \quad \mbox{for all } v\in H_{\alpha}(0,T).
$$

Now we can introduce a suitable form of  initial condition for the problem 
\eqref{(2.2)} as follows
\begin{equation}
\label{incon}
u(x, \cdot) - a(x) \in \HH(0,T) \quad \mbox{for almost all } x\in \OOO
\end{equation}
and write down a complete formulation of an initial-boundary value problem 
for the linear  time-fractional diffusion equation \eqref{(1.1)}:  
\begin{equation}
\label{(2.3)}
\left\{ \begin{array}{rl}
& \pppa (u(x,t)-a(x)) + Au(x,t) = F(x,t), 
\quad x \in \Omega, \thinspace 0<t<T, \\
& \NUNU u(x,t) + \sigma(x)u(x,t) = 0, \quad x\in \ppp\OOO, \, 0<t<T,\\
& u(x, \cdot) - a(x) \in \HH(0,T) \quad \mbox{for almost all }x\in \OOO.
\end{array}\right.
\end{equation}

It is worth mentioning that the term $\pppa (u(x,t) - a(x))$  in the first line of \eqref{(2.3)} is 
well-defined due to inclusion formulated in the third line of \eqref{(2.3)}.
In particular, for $\frac{1}{2} < \alpha < 1$, the Sobolev embedding leads 
to the inclusions
$\HH(0,T) \subset H^{\alpha}(0,T)\subset C[0,T]$. 
This means that $u\in \HH(0,T;L^2(\OOO))$ implies  $u \in C([0,T];L^2(\OOO))$  
and thus in this case the initial condition can be formulated as 
$u(\cdot,0) = a$  in $L^2$-sense.  Moreover, for sufficiently smooth functions 
$a$ and $F$, the solution to \eqref{(2.3)} can be proved to satisfy 
the initial condition in a usual sense: $\lim_{t\to 0} 
u(\cdot,t) = a$ in $L^2(\OOO)$ (see Lemma 4 in Section 3).
Consequently, the third line of \eqref{(2.3)} can be  interpreted as 
a generalized initial condition.

In the following theorem, a fundamental result regarding the unique existence 
of the solution to 
the initial-boundary value problem \eqref{(2.3)} is presented.  

\begin{theorem}
\label{t2.1}
For $a\in H^1(\OOO)$ and $F \in L^2(0,T;L^2(\OOO))$, there exists a unique 
solution 
$u(F,a) = u(F,a)(x,t) \in L^2(0,T;H^2(\OOO))$ to the initial-boundary value 
problem \eqref{(2.3)} such that 
$u(F,a)-a \in \HH(0,T;L^2(\OOO))$.

Moreover, there exists a constant $C>0$ such that 
\begin{align*}
& \Vert u(F,a)-a\Vert_{\HH(0,T;L^2(\OOO))} 
+ \Vert u(F,a)\Vert_{L^2(0,T;H^2(\OOO))} \\
\le &C(\Vert a\Vert_{H^1(\OOO)} + \Vert F\Vert_{L^2(0,T;L^2(\OOO))}).
\end{align*}
\end{theorem}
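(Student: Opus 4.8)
The plan is to reduce the abstract initial--boundary value problem \eqref{(2.3)} to an operator equation of the form $\pppa w + A_\sigma w = G$ with homogeneous initial condition $w \in \HH(0,T;L^2(\OOO))$, where $w := u - a$ and $G := F - A a$ absorbs the given data. For this one first introduces the realization $A_\sigma$ of the elliptic operator $A$ in $L^2(\OOO)$ with domain incorporating the Robin boundary condition $\NUNU v + \sigma v = 0$; under the hypotheses \eqref{(1.2)} this $A_\sigma$ is a well-defined closed operator with domain contained in $H^2(\OOO)$ and compact resolvent, and after a shift $A_\sigma + \mu I$ (to kill the zeroth-order term $c$ and the lack of sign of the Robin data, which here is nonnegative but the advection term $b_j$ spoils self-adjointness) one obtains a sectorial operator generating an analytic semigroup. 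The elliptic regularity estimate $\Vert v\Vert_{H^2(\OOO)} \le C(\Vert A_\sigma v\Vert_{L^2(\OOO)} + \Vert v\Vert_{L^2(\OOO)})$ for $v$ in the domain is the standard second-order estimate up to the boundary for the Robin problem.

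With $A_\sigma$ in hand, the main step is to solve $\pppa w + A_\sigma w = G$ for $G \in L^2(0,T;L^2(\OOO))$ in the class $w \in \HH(0,T;L^2(\OOO)) \cap L^2(0,T;D(A_\sigma))$. I would do this by the eigenfunction expansion: since $A_\sigma$ has compact resolvent (and, after the shift, we may argue via an associated self-adjoint comparison operator or directly via the analytic-semigroup/Mittag-Leffler machinery), expand $G(x,t) = \sum_n G_n(t)\varphi_n(x)$ and seek $w = \sum_n w_n(t)\varphi_n(x)$, where each $w_n$ solves the scalar fractional ODE $\pppa w_n + \lambda_n w_n = G_n$ with $w_n \in \HH(0,T)$. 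The scalar problem is solved explicitly by $w_n(t) = \int_0^t (t-s)^{\alpha-1} E_{\alpha,\alpha}(-\lambda_n (t-s)^\alpha) G_n(s)\,ds$, and one uses the known bounds on the Mittag-Leffler function $E_{\alpha,\alpha}$ together with the norm equivalence $C^{-1}\Vert v\Vert_{\HH(0,T)} \le \Vert \pppa v\Vert_{L^2(0,T)} \le C\Vert v\Vert_{\HH(0,T)}$ recalled above to get, for each $n$, the estimate $\Vert \pppa w_n\Vert_{L^2(0,T)} + \lambda_n \Vert w_n\Vert_{L^2(0,T)} \le C\Vert G_n\Vert_{L^2(0,T)}$ with $C$ independent of $n$. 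Summing over $n$ in $\ell^2$ yields $\Vert \pppa w\Vert_{L^2(0,T;L^2(\OOO))} + \Vert A_\sigma w\Vert_{L^2(0,T;L^2(\OOO))} \le C\Vert G\Vert_{L^2(0,T;L^2(\OOO))}$, and then elliptic regularity upgrades the second term to the full $L^2(0,T;H^2(\OOO))$ norm and the norm equivalence converts the first into $\Vert w\Vert_{\HH(0,T;L^2(\OOO))}$.

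It then remains to translate back: $u = w + a$ lies in $L^2(0,T;H^2(\OOO))$ because $a \in H^1(\OOO) \subset H^2(\OOO)$ fails in general --- so one must be a little careful here, and the cleaner route is to treat $a$ not by $G = F - Aa$ (which would need $a \in H^2$) but by the standard device of writing $u(F,a) = u(F - A(\Phi),0) + \Phi$ where $\Phi$ solves a stationary elliptic problem with the right boundary data, or, more simply, to interpret the first line of \eqref{(2.3)} weakly so that only $a \in H^1(\OOO)$ is needed; the $H^1$-to-$H^2$ gap is exactly the point where the Robin boundary condition and the regularity of the data interact, and I expect this to be the main technical obstacle. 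Uniqueness follows by linearity: if $u$ solves \eqref{(2.3)} with $F = 0$, $a = 0$, pairing the equation with $w_n$-components and using injectivity of $\pppa = (J^\alpha)^{-1}$ on $\HH(0,T)$ (equivalently, $J^\alpha$ is injective on $L^2(0,T)$) forces each Fourier coefficient to vanish. Finally the constant $C$ in the stated a priori bound is simply the product of the constants accumulated in the Mittag-Leffler estimate, the norm equivalence, and the elliptic estimate, all of which depend only on $\alpha$, $T$, $\OOO$, $\sigma$, and the coefficients through $\kappa$ and their $C^1$/$C^2$ norms.
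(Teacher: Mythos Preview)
Your outline has two genuine gaps that the paper's argument is specifically designed to avoid.

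First, the operator $A$ in \eqref{(2.1)} is \emph{time-dependent} (the coefficients $b_j(x,t)$ and $c(x,t)$ depend on $t$) and \emph{non-self-adjoint} (the first-order terms $b_j\ppp_j$ destroy symmetry). So there is no single realization $A_\sigma$ whose eigenfunctions $\{\va_n\}$ you can expand in; the line ``expand $G(x,t) = \sum_n G_n(t)\va_n(x)$ and seek $w = \sum_n w_n(t)\va_n(x)$'' simply does not go through. The paper deals with this by stripping off only the symmetric, time-independent principal part
\[
-A_0 v = \sumij \ppp_i(a_{ij}\ppp_j v) - c_0 v,\qquad \DDD(A_0)=\{v\in H^2(\OOO):\NUNU v+\sigma v=0\},
\]
which \emph{is} self-adjoint with an orthonormal eigenbasis, and pushing all of $\sum_j b_j(t)\ppp_j + (c_0+c(t))$ into a lower-order perturbation $Q(t)$. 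The equation then becomes a fixed-point problem $u = G + Ru$ with $Ru(t)=\int_0^t K(t-s)Q(s)u(s)\,ds$, and one shows that a high iterate $R^m$ is a contraction on $L^2(0,T;H^2(\OOO))$ via the semigroup property of $J^{\alpha/2}$ and the smoothing bound $\Vert A_0^{1/2}K(t)\Vert\le Ct^{\alpha/2-1}$.

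Second, you correctly flag the $a\in H^1(\OOO)$ versus $H^2(\OOO)$ issue but do not resolve it; the substitution $w=u-a$, $G=F-Aa$ needs $Aa\in L^2$, and moreover $a$ need not satisfy the Robin condition, so $w$ would not lie in $\DDD(A_\sigma)$ anyway. The paper never makes this substitution. Instead it writes $u(t)=S(t)a+\int_0^t K(t-s)F(s)\,ds+Ru(t)$ directly, and the crucial observation is
\[
\Vert A_0 S(t)a\Vert = \Vert A_0^{1/2}S(t)A_0^{1/2}a\Vert \le Ct^{-\alpha/2}\Vert a\Vert_{H^1(\OOO)},
\]
which is square-integrable on $(0,T)$ precisely because $\alpha<1$. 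This is how $a\in H^1(\OOO)$ suffices for $u\in L^2(0,T;H^2(\OOO))$; the smoothing of $S(t)$, quantified through the fractional power $A_0^{1/2}$ and the identification $\DDD(A_0^{1/2})=H^1(\OOO)$, does the work that your scheme cannot.
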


Before starting with a proof of Theorem \ref{t2.1}, we introduce some notations and derive several helpful results needed for the proof. 

For an arbitrary constant $c_0>0$,  we define an elliptic operator $A_0$ 
 as follows:
\begin{equation}
\label{(3.2)}
\left\{ \begin{array}{rl}
& (-A_0v)(x) := \sumij \ppp_i(a_{ij}(x)\ppp_jv(x)) - c_0v(x), \quad
x\in \OOO, \\
& \mathcal{D}(A_0) = \left\{ v\in H^2(\OOO);\,
\NUNU v + \sigma v = 0 \quad \mbox{on } \ppp\OOO \right\}.
\end{array}\right.
\end{equation}

We recall that in the definition \eqref{(3.2)}, $\sigma$ is a smooth function, the inequality $\sigma(x)\ge 0,\ x\in \ppp\OOO$ holds true, 
and the coefficients $a_{ij}$ satisfy the conditions 
\eqref{(1.2)}. 

Henceforth, by $\Vert \cdot\Vert$ and $(\cdot,\cdot)$ we denote the 
standard norm and
the scalar product  in $L^2(\OOO)$, respectively. 
It is well-known that the operator $A_0$ is self-adjoint and its resolvent 
is a compact operator.  Moreover, for a sufficiently large constant
$c_0>0$, by Lemma \ref{lem1} in Section \ref{sec8}, we can verify that $A_0$ is 
positive definite.
Therefore, by choosing the  constant $c_0>0$ large enough,
the spectrum of $A_0$ consists entirely of discrete positive 
eigenvalues $0 < \la_1 \le \la_2 \le \cdots$,
which are numbered according to their multiplicities and  
$\la_n \to \infty$ as $n\to \infty$.
Let $\va_n$ be an eigenvector corresponding to the eigenvalue $\la_n$ such that $A\va_n = \la_n\va_n$ and 
$(\va_n, \va_m) = 0$ if $n \ne m$ and $(\va_n,\va_n) = 1$. 
Then the system $\{ \va_n\}_{n\in \N}$ of the 
eigenvectors forms an
orthonormal basis in $L^2(\OOO)$ and for any $\gamma\ge 0$ we can define
the fractional powers $A_0^{\gamma}$ of the operator $A_0$ 
by the following relation (see, e.g., \cite{Pa}):
$$
A_0^{\gamma}v = \sum_{n=1}^{\infty} \la_n^{\gamma} (v,\va_n)\va_n,
$$
where
$$
v \in \mathcal{D}(A_0^{\gamma})
:= \left\{ v\in L^2(\OOO): \thinspace
\sum_{n=1}^{\infty} \la_n^{2\gamma} (v,\va_n)^2 < \infty\right\}
$$
and 
$$
\Vert A_0^{\gamma}v\Vert = \left( \sum_{n=1}^{\infty}
\la_n^{2\gamma} (v,\va_n)^2 \right)^{\frac{1}{2}}.
$$
We note that $\mathcal{D}(A_0^{\gamma}) \subset H^{2\gamma}(\OOO)$.

Our proof of Theorem \ref{t2.1} is similar to the one presented in \cite{GLY}, \cite{KRY} for the case of the homogeneous Dirichlet 
boundary condition. In particular, we 
employ  the operators $S(t)$ and $K(t)$ defined by (\cite{GLY}, \cite{KRY})
\begin{equation}
\label{(5.1)}
S(t)a = \sum_{n=1}^{\infty} E_{\alpha,1}(-\la_n t^{\alpha})
(a,\va_n)\va_n, \quad a\in L^2(\OOO), \thinspace t>0  
\end{equation}
and
\begin{equation}
\label{(5.2)}
K(t)a = -A_0^{-1}S'(t)a 
= \sum_{n=1}^{\infty} t^{\alpha-1}E_{\alpha,\alpha}(-\la_n t^{\alpha})
(a,\va_n)\va_n, \quad a\in L^2(\OOO), \thinspace t>0.
\end{equation}
In the above formulas, $E_{\alpha,\beta}(z)$ denotes the Mittag-Leffler function defined by 
a convergent series as follows:
$$
E_{\alpha,\beta}(z) = \sum_{k=0}^\infty \frac{z^k}{\Gamma(\alpha\, k + \beta)},
\ \alpha >0,\ \beta \in \mathbb{C},\ z \in \mathbb{C}.
$$
It follows directly from the definitions given above that
$A_0^{\gamma}K(t)a = K(t)A_0^{\gamma}a$
and $A_0^{\gamma}S(t)a = S(t)A_0^{\gamma}a$ for $a \in \mathcal{D}
(A_0^{\gamma})$.
Moreover, the inequality (see, e.g., Theorem 1.6 (p. 35) in \cite{Po})
$$
\max \{ \vert E_{\alpha,1}(-\la_nt^{\alpha})\vert, \, 
\vert E_{\alpha,\alpha}(-\la_nt^{\alpha})\vert \} \le \frac{C}{1+\la_nt^{\alpha}}
\quad \mbox{for all $t>0$}
$$
implicates the estimations (\cite{GLY})
\begin{equation}
\label{(5.3)}
\left\{ \begin{array}{l}
\Vert A_0^{\gamma}S(t)a\Vert \le Ct^{-\alpha\gamma}\Vert a\Vert, \\
\Vert A_0^{\gamma}K(t)a\Vert \le Ct^{\alpha(1-\gamma)-1}
\Vert a\Vert, \quad a \in L^2(\OOO), \thinspace t > 0, \thinspace
0 \le \gamma \le 1.
\end{array}\right.                    
\end{equation}
In order to shorten the notations and to focus on the dependence on the time 
variable $t$, henceforth we sometimes omit the variable  $x$ in the functions 
of two variables $x$ and $t$ and write, say, 
$u(t)$ instead of  $u(\cdot,t)$.

Due to the inequalities \eqref{(5.3)}, the estimations provided in the formulation of Theorem \ref{t2.1} can be derived as in
the case of the fractional powers of generators of the analytic semigroups 
(\cite{He}). To do this, we first formulate and prove the following lemma:

\begin{lemma}
\label{l5.1}
Under the conditions formulated above, the following estimates hold true for 
$F\in L^2(0,T;L^2(\OOO))$ and  $a \in L^2(\OOO)$:

\noindent
(i) 
$$
\left\Vert \int^t_0 A_0K(t-s)F(s) ds \right\Vert_{L^2(0,T;L^2(\OOO))}
\le C\Vert F\Vert_{L^2(0,T;L^2(\OOO))},
$$
\noindent
(ii)
$$
\left\Vert \int^t_0 K(t-s)F(s) ds \right\Vert_{\HH(0,T;L^2(\OOO))}
\le C\Vert F\Vert_{L^2(0,T;L^2(\OOO))},
$$
\noindent
(iii) 
$$
\Vert S(t)a - a\Vert_{\HH(0,T;L^2(\OOO))}
+ \Vert S(t)a\Vert_{L^2(0,T;H^2(\OOO))} \le C\Vert a\Vert.
$$
\end{lemma}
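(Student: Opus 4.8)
The plan is to expand everything in the $L^2(\OOO)$-orthonormal eigenbasis $\{\va_n\}$ of $A_0$ and to reduce each of the three estimates to a uniform-in-$n$ bound for a scalar convolution operator on $L^2(0,T)$, systematically replacing the $\HH(0,T)$-norm by $\Vert\pppa\cdot\Vert_{L^2(0,T)}$ via the equivalence $C^{-1}\Vert v\Vert_{H_\alpha(0,T)}\le\Vert\pppa v\Vert_{L^2(0,T)}\le C\Vert v\Vert_{H_\alpha(0,T)}$. Writing $F_n(t)=(F(t),\va_n)$, $a_n=(a,\va_n)$, $g_n(\tau)=\tau^{\alpha-1}\MLAA(-\la_n\tau^{\alpha})$ and $\kappa_n(\tau)=\la_n g_n(\tau)$, Parseval in $L^2(\OOO)$ together with Fubini turns both (i) and (ii) into the claim that convolution with $\kappa_n$ is bounded on $L^2(0,T)$ with operator norm independent of $n$, since $\int_0^tA_0K(t-s)F(s)\,ds=\sum_n(\kappa_n*F_n)(t)\va_n$.

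The key elementary point is the uniform bound $\Vert\kappa_n\Vert_{L^1(0,T)}\le 1$. It follows from the identity $\frac{d}{d\tau}\MLONE(-\la_n\tau^{\alpha})=-\la_n\tau^{\alpha-1}\MLAA(-\la_n\tau^{\alpha})$ and from $\MLAA(-\la_n\tau^{\alpha})\ge 0$ for $0<\alpha<1$ (complete monotonicity of the Mittag-Leffler function), which give $\int_0^T\kappa_n(\tau)\,d\tau=1-\MLONE(-\la_nT^{\alpha})\le 1$. By Young's convolution inequality $\Vert\kappa_n*F_n\Vert_{L^2(0,T)}\le\Vert F_n\Vert_{L^2(0,T)}$; squaring and summing over $n$ proves (i). For (ii), set $w(t)=\int_0^tK(t-s)F(s)\,ds=\sum_nw_n(t)\va_n$ with $w_n=g_n*F_n$; the resolvent identity $g_n(\tau)=\tfrac{\tau^{\alpha-1}}{\Gamma(\alpha)}-\la_n(J^{\alpha}g_n)(\tau)$ gives $w_n=J^{\alpha}(F_n-\la_nw_n)\in J^{\alpha}L^2(0,T)=\HH(0,T)$ and $\pppa w_n=F_n-\la_nw_n$, so $\Vert\pppa w_n\Vert_{L^2(0,T)}\le\Vert F_n\Vert_{L^2(0,T)}+\Vert\kappa_n*F_n\Vert_{L^2(0,T)}\le 2\Vert F_n\Vert_{L^2(0,T)}$; summing over $n$ and using the norm equivalence yields (ii).

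For (iii) I would again argue modewise. Since $\MLONE(-\la_nt^{\alpha})-1$ solves $\pppa(\MLONE(-\la_nt^{\alpha})-1)=-\la_n\MLONE(-\la_nt^{\alpha})$, one gets $\pppa(S(t)a-a)=-A_0S(t)a$, hence by the norm equivalence $\Vert S(t)a-a\Vert_{\HH(0,T;L^2(\OOO))}\le C\Vert A_0S(t)a\Vert_{L^2(0,T;L^2(\OOO))}$; and elliptic regularity $\Vert v\Vert_{H^2(\OOO)}\le C(\Vert A_0v\Vert+\Vert v\Vert)$ together with $\Vert S(t)\Vert\le C$ bounds $\Vert S(t)a\Vert_{L^2(0,T;H^2(\OOO))}$ by the same quantity plus $C\Vert a\Vert$. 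Thus the whole lemma comes down to estimating $\Vert A_0S(t)a\Vert_{L^2(0,T;L^2(\OOO))}$.

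This last step is the main obstacle. Applying \eqref{(5.3)} with $\gamma=1$ only gives $\Vert A_0S(t)a\Vert\le Ct^{-\alpha}\Vert a\Vert$, and $t^{-\alpha}\notin L^2(0,T)$ once $\alpha\ge\tfrac12$, so one cannot afford a full power $A_0$ on $S(t)$. Instead I would write $A_0S(t)a=A_0^{1/2}S(t)(A_0^{1/2}a)$ using $A_0^{1/2}S(t)=S(t)A_0^{1/2}$ and invoke \eqref{(5.3)} with $\gamma=\tfrac12$ to get $\Vert A_0S(t)a\Vert\le Ct^{-\alpha/2}\Vert A_0^{1/2}a\Vert$; since $\alpha<1$ we have $t^{-\alpha/2}\in L^2(0,T)$, whence $\Vert A_0S(t)a\Vert_{L^2(0,T;L^2(\OOO))}\le C\Vert A_0^{1/2}a\Vert$, and $\ddd(A_0^{1/2})=H^1(\OOO)$ with equivalent norms converts this into the required bound (this is where the $H^1$-smoothness of $a$ is genuinely used, so that the norm of $a$ entering here is the $H^1$-norm, consistently with Theorem \ref{t2.1}). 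Collecting the three parts completes the proof.
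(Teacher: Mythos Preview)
Your arguments for (i) and (ii) are essentially identical to the paper's: the same eigenfunction expansion, the same uniform $L^1$-bound $\int_0^T \la_n\tau^{\alpha-1}\MLAA(-\la_n\tau^\alpha)\,d\tau\le 1$ via the derivative identity and complete monotonicity, Young's inequality for (i), and the same identity $\pppa w_n = F_n - \la_n w_n$ (the paper derives it by computing $J^\alpha(L_nf)$ directly, you via the resolvent identity for $g_n$; these are the same calculation) for (ii).

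For (iii) the paper gives no proof at all, referring to \cite{KRY}. Your observation about the obstacle is in fact correct and sharper than you state: the inequality (iii) as written, with $\Vert a\Vert=\Vert a\Vert_{L^2(\OOO)}$ on the right, is \emph{false} for $\alpha\ge\tfrac12$. Taking $a=\va_n$ gives $\Vert A_0S(t)a\Vert_{L^2(0,T;L^2(\OOO))}^2=\la_n^2\int_0^T\MLONE(-\la_n t^\alpha)^2\,dt\sim C_\alpha\,\la_n^{2-1/\alpha}\to\infty$ while $\Vert a\Vert=1$. The paper implicitly acknowledges this: immediately after Lemma~\ref{l5.1}, in the proof of Theorem~\ref{t2.1}, it re-derives $\Vert S(t)a\Vert_{L^2(0,T;H^2(\OOO))}\le C\Vert a\Vert_{H^1(\OOO)}$ exactly as you do (splitting $A_0=A_0^{1/2}A_0^{1/2}$ and using \eqref{(5.3)} with $\gamma=\tfrac12$). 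So your route for (iii) is the correct one, your caveat about the $H^1$-norm is justified, and the lemma should be read with $C\Vert a\Vert_{H^1(\OOO)}$ on the right-hand side of (iii).
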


\begin{proof} 
We start with proving the estimate (i). By \eqref{(5.2)}, we have
\begin{align*}
& \left\Vert \int^t_0 A_0 K(t-s)F(s) ds \right\Vert^2\\
=& \left\Vert \sumn \left(\int^t_0 \la_n(t-s)^{\alpha-1}
\MLTWO(-\la_n(t-s)^{\alpha})
(F(s), \va_n) ds\right) \va_n\right\Vert^2\\
=& \sumn \left\vert \int^t_0 \la_n(t-s)^{\alpha-1}\MLTWO(-\la_n(t-s)^{\alpha})
(F(s), \va_n) ds \right\vert^2.
\end{align*}
Therefore, using the Parseval equality and the Young inequality for the 
convolution, we obtain
\begin{align*} 
& \left\Vert \int^t_0 A_0K(t-s)F(s) ds \right\Vert^2_{L^2(0,T;L^2(\OOO))}\\
= & \sumn \int^T_0 \vert (\la_ns^{\alpha-1}\MLTWO(-\la_ns^{\alpha}) \, *\,
(F(s), \va_n) \vert^2 ds\\
= & \sumn \Vert \la_ns^{\alpha-1}\MLTWO(-\la_ns^{\alpha}) \, * \,
(F(s), \va_n) \Vert^2_{L^2(0,T)}\\
\le & \sumn \left( \la_n\int^t_0 \vert t^{\alpha-1}\MLTWO(-\la_nt^{\alpha})
\vert dt \right)^2 \Vert (F(t),\va_n)\Vert^2_{L^2(0,T)}.
\end{align*} 
Then we employ the representation
\begin{equation}\label{(2.9a)}
\frac{d}{dt}\MLONE(-\la_nt^{\alpha}) 
= -\la_nt^{\alpha-1}\MLTWO(-\la_nt^{\alpha}),
\end{equation}
and the complete monotonicity of the Mittag-Leffler function (\cite{GKMR})
$$
\MLONE(-\la_nt^{\alpha}) > 0, \quad
\frac{d}{dt}\MLONE(-\la_nt^{\alpha}) \le 0, \quad t\ge 0, \quad 0<\alpha\le 1
$$
to get the inequality
\begin{equation}
\label{(5.4)}
\int^T_0 \vert \la_nt^{\alpha-1}\MLTWO(-\la_nt^{\alpha})\vert dt 
= -\int^T_0 \frac{d}{dt}\MLONE(-\la_nt^{\alpha})dt
\end{equation}
$$
= 1 - \MLONE(-\la_nT^{\alpha}) \le 1 \quad \mbox{for all $n\in \N$}.
$$
Hence,
\begin{align*}
& \left\Vert \int^t_0 A_0K(t-s)F(s) ds \right\Vert^2_{L^2(0,T;L^2(\OOO))}
\le \sumn \Vert (F(t), \va_n)\Vert^2_{L^2(0,T)}\\
=& \int^T_0 \sumn \vert (F(t), \va_n) \vert^2 dt
= \int^T_0 \Vert F(\cdot,t)\Vert^2 dt 
= \Vert F\Vert_{L^2(0,T;L^2(\OOO))}^2.
\end{align*}

Now we proceed with proving the estimate (ii). For $0<t<T, \, n\in \N$
and $f\in L^2(0,T)$,
we set 
$$
(L_nf)(t) := \int^t_0 (t-s)^{\alpha-1}\MLTWO(-\la_n(t-s)^{\alpha}) 
f(s) ds.
$$
Then 
$$
\int^t_0 K(t-s)F(s) ds = \sumn (L_nf)(t)\va_n
$$
in $L^2(\OOO)$ for any fixed $t \in [0,T]$.

First we prove that 
\begin{equation}
\label{(5.5)}
\left\{ \begin{array}{rl}
& L_nf \in \HH(0,T), \\
& \pppa(L_nf)(t) = -\la_nL_nf(t) + f(t), \quad 0<t<T, \\
& \Vert L_nf\Vert_{\HH(0,T)} \le C\Vert f\Vert_{L^2(0,T)},
\quad n\in \N \quad \mbox{for each } f\in L^2(0,T).            
\end{array}\right.
\end{equation}
In order to prove this, we apply the Riemann-Liouville fractional 
integral  $J^{\alpha}$ to $L_nf$ and get the representation
\begin{align*}
& J^{\alpha}(L_nf)(t) 
= \frac{1}{\Gamma(\alpha)}\int^t_0 (t-s)^{\alpha-1}(L_nf)(s) ds\\
=& \frac{1}{\Gamma(\alpha)} \int^t_0 (t-s)^{\alpha-1}
\left( \int^s_0  (s-\xi)^{\alpha-1}\MLTWO(-\la_n (s-\xi)^{\alpha})f(\xi) d\xi
\right) ds\\
=& \frac{1}{\Gamma(\alpha)}\int^t_0 f(\xi) \left( 
\int^t_{\xi} (t-s)^{\alpha-1}(s-\xi)^{\alpha-1}\MLTWO(-\la_n (s-\xi)^{\alpha})
ds \right) d\xi.
\end{align*}
By direct calculations, using \eqref{(2.9a)}, we obtain the formula
\begin{align*}
& \frac{1}{\Gamma(\alpha)}\int^t_{\xi} (t-s)^{\alpha-1}(s-\xi)^{\alpha-1}
\MLTWO(-\la_n (s-\xi)^{\alpha}) ds\\
=&  -\frac{1}{\la_n}(t-\xi)^{\alpha-1}\left(\MLTWO(-\la_n t^{\alpha}) 
- \frac{1}{\Gamma(\alpha)}\right).
\end{align*}

Therefore, we have the relation 
\begin{align*}
& J^{\alpha}(L_nf)(t) = -\frac{1}{\la_n}(L_nf)(t)
+ \frac{1}{\la_n}\int^t_0 (t-\xi)^{\alpha-1}\frac{1}{\Gamma(\alpha)} 
f(\xi) d\xi\\
=& -\frac{1}{\la_n}(L_nf)(t) + \frac{1}{\la_n}(J^{\alpha}f)(t),
\quad n\in \N,
\end{align*}
that is,
$$
(L_nf)(t) = -\la_n J^{\alpha}(L_nf)(t) + (J^{\alpha}f)(t), \quad 0<t<T.
$$
Hence, $L_nf \in \HH(0,T) = J^{\alpha}L^2(0,T)$.  By definition, 
$\pppa = (J^{\alpha})^{-1}$ (\cite{KRY}) and thus the last  formula can be rewritten in the form
$$
\pppa (L_nf) = -\la_n L_nf + f \quad \mbox{in }(0,T).
$$

Using the inequality \eqref{(5.4)}, we obtain
$$
\la_n\Vert L_nf\Vert_{L^2(0,T)} \le \la_n\Vert s^{\alpha-1}
\MLTWO(-\la_ns^{\alpha})\Vert_{L^1(0,T)}\Vert f\Vert_{L^2(0,T)}
\le \Vert f\Vert_{L^2(0,T)}.
$$
Therefore,
\begin{align*}
& \Vert L_nf\Vert_{\HH(0,T)} \le C\Vert \pppa(L_nf)\Vert_{L^2(0,T)}
\le C(\Vert -\la_nL_nf\Vert_{L^2(0,T)} + \Vert f\Vert_{L^2(0,T)})\\
\le& C\Vert f\Vert_{L^2(0,T)}, \quad n\in \N, \quad f\in L^2(0,T).
\end{align*}
Thus, the estimate \eqref{(5.5)} is proved.

Now we set $f_n(s) := (F(s), \, \va_n)$ for $0<s<T$ and $n\in \N$.
Since 
$$
\pppa \int^t_0 K(t-s)F(s) ds = \sumn \pppa(L_nf_n)(t)\va_n,
$$
we obtain
$$
\left\Vert \pppa \int^t_0 K(t-s)F(s) ds\right\Vert^2_{L^2(\OOO)}
= \sumn \vert \pppa(L_nf_n)(t)\vert^2.
$$ 
By applying \eqref{(5.5)}, we get the following chain of inequalities and equations: 
\begin{align*}
\left\Vert \pppa \int^t_0 K(t-s)F(s) ds\right\Vert^2
_{\HH(0,T;L^2(\OOO))}
\le C\left\Vert \pppa\int^t_0 K(t-s)F(s) ds\right\Vert^2
_{L^2(0,T;L^2(\OOO))}\\
= C\sumn \Vert \pppa(L_nf_n)\Vert^2_{L^2(0,T)}
\le C\sumn \Vert L_nf_n\Vert^2_{\HH(0,T)} \le C\sumn \Vert f_n\Vert^2_{L^2(0,T)}\\
=  C\int^T_0 \sumn \vert (F(s),\va_n) \vert^2 ds
= C\int^T_0 \Vert F(s)\Vert_{L^2(\OOO)}^2 ds 
= C\Vert F\Vert^2_{L^2(0,T;L^2(\OOO))}.
\end{align*}
Thus, the proof of the estimate (ii) is completed.

The estimate (iii) from Lemma \ref{l5.1} follows from the standard estimates
of the operator $S(t)$. It can be derived by the same arguments as those that 
were employed in 
Section 6 of Chapter 4 in \cite{KRY} for the case of the 
homogeneous Dirichlet boundary condition and we omit here  the technical details. 
\end{proof}

Now we proceed to the proof of Theorem \ref{t2.1}.
 
 \begin{proof}
 
In the first line of the problem \eqref{(2.3)}, we regard the expressions 
$\sum_{j=1}^d b_j(x,t)\ppp_ju$ and 
$c(x,t)u$ as some non-homogeneous terms. Then this problem can be rewritten in terms  of 
the operator $A_0$ as follows
\begin{equation}
\label{(5.6)}
\left\{ \begin{array}{rl}
& \pppa (u-a) + A_0u(x,t) = F(x,t)\\
+& \sum_{j=1}^d b_j(x,t)\ppp_ju + (c_0+c(x,t))u, \quad 
x\in \OOO,\, 0<t<T,\\
& \NUNU u + \sigma(x) u = 0 \quad \mbox{on }\ppp\OOO \times (0,T),\\
& u(x,\cdot) - a(x) \in \HH(0,T) \quad \mbox{for almost all }x\in \OOO.
\end{array}\right.
\end{equation}
In its turn, the first line of \eqref{(5.6)} can be represented in the form  (\cite{GLY}, 
\cite{KRY})
$$
u(t) = S(t)a + \int^t_0 K(t-s)F(s) ds 
$$
\begin{equation}
\label{(5.7)}
+ \int^t_0 K(t-s) \left(\sum_{j=1}^d b_j(s)\ppp_ju(s) 
+ (c_0+c(s))u(s) \right) ds, \quad 0<t<T.        
\end{equation}
Moreover, it is known that if 
$u\in L^2(0,T;H^2(\OOO))$ satisfies the initial condition 
$u-a \in \HH(0,T;L^2(\OOO))$ and 
the equation \eqref{(5.7)}, then $u$ is a solution to the problem 
\eqref{(5.6)}.
With the notations
\begin{equation}
\label{(5.8)}
\left\{ \begin{array}{rl}
& G(t):=  \int^t_0 K(t-s)F(s) ds + S(t)a, \\
& Qv(t) = Q(t)v(t) := \sum_{j=1}^d b_j(\cdot,t)\ppp_jv(t) 
+ (c_0+c(\cdot,t))v(t), \\
& Rv(t):= \int^t_0 K(t-s) \left(\sum_{j=1}^d b_j(\cdot,s)\ppp_jv(s) 
+ (c_0+c(\cdot,s))v(s) \right) ds,\\
&\qquad \qquad \qquad \mbox{for $0<t<T$}, 
\end{array}\right.
\end{equation}
the equation \eqref{(5.7)} can be represented in form of a fixed point equation
$u = Ru + G$ on the space $L^2(0,T;H^2(\OOO))$.

Lemma \ref{l5.1} yields the inclusion $G \in L^2(0,T;H^2(\OOO))$.
Moreover, since $\Vert A_0^{\hhalf}a\Vert \le C\Vert a\Vert_{H^1(\OOO)}$
and $\DDD(A_0^{\hhalf}) = H^1(\OOO)$ (see, e.g., \cite{Fu}),
the estimate \eqref{(5.3)} implies
$$
\Vert S(t)a\Vert_{H^2(\OOO)} \le C\Vert A_0S(t)a\Vert
= C\Vert A_0^{\hhalf}S(t)A_0^{\hhalf}a\Vert
\le Ct^{-\hhalf\alpha}\Vert a\Vert_{H^1(\OOO)}
$$
and thus
$$
\Vert S(t)a\Vert^2_{L^2(0,T;H^2(\OOO))} 
\le C\left(\int^T_0 t^{-\alpha}dt \right)\Vert a\Vert^2_{H^1(\OOO)}
\le \frac{CT^{1-\alpha}}{1-\alpha}\Vert a\Vert^2_{H^1(\OOO)}.
$$
Consequently, the inclusion $S(t)a \in L^2(0,T;H^2(\OOO))$ holds valid.

For $0<t<T$, we next estimate $\Vert Rv(\cdot,t)\Vert_{H^2(\OOO)}$ for 
$v(\cdot,t) \in \mathcal{D}(A_0)$ as follows:
\begin{align*}
& \Vert Rv(\cdot,t)\Vert_{H^2(\OOO)}
\le C\Vert A_0Rv(\cdot,t)\Vert_{L^2(\OOO)}\\
\le & \int^t_0 \left\Vert A_0^{\hhalf}K(t-s)A_0^{\hhalf}
\left(\sum_{j=1}^d b_j(s)\ppp_jv(s) + (c_0+c(s))v(s) \right)
\right\Vert ds\\
\le & C\int^t_0 \Vert A_0^{\hhalf}K(t-s)\Vert 
\left\Vert A_0^{\hhalf}
\left(\sum_{j=1}^d b_j(s)\ppp_jv(s) + (c_0+c(s))v(s) \right)
\right\Vert ds\\
\le& C\int^t_0 (t-s)^{\hhalf\alpha -1}\Vert v(s)\Vert_{H^2(\OOO)}ds
= C\left( \Gamma\left(\hhalf\alpha\right)J^{\hhalf\alpha}\Vert v\Vert
_{H^2(\OOO)}\right)(t).
\end{align*}
For derivation of this estimate, we employed the inequalities
$$
\Vert A_0^{\hhalf}b_j(s)\ppp_jv(t)\Vert 
\le C\Vert b_j(s)\ppp_jv(s)\Vert_{H^1(\OOO)}
\le C\Vert v(s)\Vert_{H^2(\OOO)}
$$
and $\Vert (c(s)+c_0)v(s)\Vert_{H^1(\OOO)} \le C\Vert v(s)\Vert_{H^2(\OOO)}$ that are valid 
because of the inclusions $b_j \in C^1(\ooo{\OOO}\times [0,T])$) and 
$c+c_0\in C([0,T];C^1(\ooo{\OOO}))$.

Since $(J^{\hhalf\alpha}w_1)(t) \ge (J^{\hhalf\alpha}w_2)(t)$ 
if $w_1(t) \ge w_2(t)$ for $0\le t\le T$, and
$J^{\hhalf\alpha}J^{\hhalf\alpha}w = J^{\alpha}w$ for 
$w_1, w_2, w \in L^2(0,T)$, we have
\begin{align*}
&\Vert R^2v(t)\Vert_{H^2(\OOO)} = \Vert R(Rv)(t)\Vert_{H^2(\OOO)}\\
\le & C\left( \Gamma\left(\hhalf\alpha\right)J^{\hhalf\alpha}
\left(C\Gamma\left(\hhalf\alpha\right)J^{\hhalf\alpha}
\Vert v\Vert_{H^2(\OOO)}\right) \right)(t)\\
= & \left( C\Gamma\left(\hhalf\alpha\right)\right)^2
(J^{\alpha}\Vert v\Vert_{H^2(\OOO)})(t).
\end{align*}
Repeating this argumentation $m$-times, we obtain
\begin{align*}
& \Vert R^mv(t)\Vert_{H^2(\OOO)}
\le \left( C\Gamma\left(\hhalf\alpha\right)\right)^m
\left( J^{\hhalf\alpha m}\Vert v\Vert_{H^2(\OOO)}\right)(t)\\
\le & \frac{\left( C\Gamma\left(\hhalf\alpha\right)\right)^m}
{\Gamma\left( \hhalf\alpha m\right)}
\int^t_0 (t-s)^{\frac{m}{2}\alpha -1} \Vert v(\xi)\Vert_{H^2(\OOO)}ds, \quad 
0<t<T.
\end{align*}
Applying the Young inequality to the integral at the right-hand side of the last estimate, 
we arrive to the inequality
\begin{align*}
& \Vert R^mv(t)\Vert_{L^2(0,T; H^2(\OOO))}^2
\le \left(  \frac{\left( C\Gamma\left(\hhalf\alpha\right) \right)^m}
{\Gamma\left( \hhalf\alpha m\right)}\right)^2
\Vert t^{\frac{\alpha m}{2}-1}\Vert_{L^1(0,T)}^2 
\Vert v\Vert_{L^2(0,T;H^2(\OOO))}^2\\
=& \frac{\left( CT^{\frac{\alpha}{2}}
\Gamma\left(\hhalf\alpha \right)\right)^{2m}}
{\Gamma\left( \hhalf\alpha m +1\right)^2}
\Vert v\Vert_{L^2(0,T;H^2(\OOO))}^2.
\end{align*}
Employing the known asymptotic behavior of the gamma function, we obtain the 
relation
$$
\lim_{m\to\infty} \frac{\left( CT^{\frac{\alpha}{2}}
\Gamma\left(\hhalf\alpha \right)\right)^m}
{\Gamma\left( \hhalf\alpha m +1\right)} = 0
$$
that means that for sufficiently large $m\in \N$, the mapping
$$
R^m: L^2(0,T;H^2(\OOO))\, \longrightarrow \, L^2(0,T;H^2(\OOO))
$$ 
is a contraction.  Hence, by the Banach fixed point theorem, the equation 
\eqref{(5.7)} possesses a unique fixed point.
Therefore, by the first equation in \eqref{(2.3)}, we obtain the inclusion
$\pppa (u-a) \in L^2(0,T;L^2(\OOO))$.
Since $\Vert \eta\Vert_{\HH(0,T)} \sim \Vert \pppa \eta\Vert_{L^2(0,T)}$
for $\eta \in \HH(0,T)$ (\cite{KRY}), we finally obtain the estimate
$$
\Vert u-a\Vert_{\HH(0,T;L^2(\OOO))} + \Vert u\Vert_{L^2(0,T;H^2(\OOO))}
\le C(\Vert a\Vert_{H^1(\OOO)} + \Vert F\Vert_{L^2(0,T;L^2(\OOO))}).
$$
The proof of Theorem \ref{t2.1} is completed.
\end{proof}

\section{Key lemma} 
\label{sec3}

\setcounter{section}{3}
\setcounter{equation}{0}

For derivation of the comparison principles for solutions to the initial-boundary value problems for the linear and 
semilinear time-fractional diffusion equations, we need some auxiliary results 
that are formulated and proved in this section.


In addition to the operator $-A_0$ defined by \eqref{(3.2)}, 
we define an elliptic operator $-A_1$ with a positive zeroth-order
coefficient:
\begin{equation}\label{(3.1a)}
(-A_1(t)v)(x):= (-A_1v)(x) 
\end{equation}
$$
:= \sumij \ppp_i(a_{ij}(x)\ppp_jv(x)) 
+ \sum_{j=1}^d b_j(x,t)\ppp_jv - b_0(x,t)v,
$$
where $b_0 \in C^1([0,T];C^1(\ooo{\OOO})) \cap 
C([0,T];C^2(\ooo{\OOO}))$, $b_0(x,t) > 0,\ (x,t)\in \ooo{\OOO}\times 
[0,T]$, and 
$\min_{(x,t)\in \ooo{\OOO}\times 
[0,T]} b_0(x,t)$ is sufficiently large.

We also recall that for $y\in W^{1,1}(0,T)$, the pointwise Caputo derivative $\ddda$ is defined by 
\begin{equation}
\label{(4.1)}
\ddda y(t) = \frac{1}{\Gamma(1-\alpha)}
\int^t_0 (t-s)^{-\alpha}\frac{dy}{ds}(s) ds.          
\end{equation}


In what follows, we employ an extremum principle for the Caputo fractional 
derivative formulated below.

\begin{lemma}[\cite{Lu1}]
\label{l4.1}
Let the inclusions $y\in C[0,T]$ and $t^{1-\alpha}y' \in C[0,T]$ hold true.  

If the function $y=y(t)$ attains its minimum over the interval 
$[0,T]$ at the point $t_0 \in (0, \,T]$, then 
$$
\ddda y(t_0) \le 0.
$$
\end{lemma}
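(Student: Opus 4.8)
The plan is to reduce the statement to the classical extremum principle by working with the function $z(t) := y(t) - y(t_0) \ge 0$ on $[0,T]$, which attains the value $0$ at $t = t_0$ and satisfies the same regularity hypotheses ($z \in C[0,T]$ and $t^{1-\alpha}z' \in C[0,T]$), together with the observation that $\ddda z = \ddda y$ since the Caputo derivative annihilates constants. Thus it suffices to show $\ddda z(t_0) \le 0$. Writing out the definition \eqref{(4.1)} at $t = t_0$, the goal is to estimate
\begin{equation*}
\ddda z(t_0) = \frac{1}{\Gamma(1-\alpha)} \int^{t_0}_0 (t_0 - s)^{-\alpha} z'(s)\, ds.
\end{equation*}
The first step is to justify this integral as an absolutely convergent (improper) integral: near $s = 0$ the factor $(t_0-s)^{-\alpha}$ is bounded, and near $s = t_0$ we have $|z'(s)| = |y'(s)| \le C s^{\alpha-1}$ by the hypothesis $t^{1-\alpha}y' \in C[0,T]$, so the integrand is $O((t_0-s)^{-\alpha})$ there, which is integrable since $\alpha < 1$; near $s=0$ it behaves like $s^{\alpha - 1}$, also integrable.

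The core step is an integration by parts that moves the derivative off $z$. For $0 < \ep < t_0$ write
\begin{equation*}
\int^{t_0-\ep}_{\ep} (t_0-s)^{-\alpha} z'(s)\, ds
= \Big[(t_0-s)^{-\alpha} z(s)\Big]^{t_0-\ep}_{\ep}
- \alpha \int^{t_0-\ep}_{\ep} (t_0-s)^{-\alpha-1} z(s)\, ds,
\end{equation*}
and then let $\ep \to 0^+$. The key point is that the boundary term at $s = t_0 - \ep$, namely $\ep^{-\alpha} z(t_0 - \ep)$, tends to $0$: indeed $z(t_0) = 0$ and $|z(t_0-\ep)| = |z(t_0-\ep) - z(t_0)| \le \int_{t_0-\ep}^{t_0} |z'(s)|\,ds \le C \int_{t_0-\ep}^{t_0} s^{\alpha-1}\,ds \le C' \ep$ (using $t_0 > 0$ so $s^{\alpha-1}$ is bounded near $t_0$), hence $\ep^{-\alpha}|z(t_0-\ep)| \le C' \ep^{1-\alpha} \to 0$. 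The boundary term at $s = \ep$ contributes $-\ep^{-\alpha}z(\ep) + t_0^{-\alpha} z(0) \to t_0^{-\alpha} z(0) \ge 0$ in the limit, but with a minus sign in front it gives $-t_0^{-\alpha}z(0) \le 0$. One must also check the remaining integral $\int_0^{t_0}(t_0-s)^{-\alpha-1}z(s)\,ds$ converges (its only problematic endpoint is $s=t_0$, where $z(s) = O(t_0 - s)$ kills the singularity, leaving $(t_0-s)^{-\alpha}$, integrable). We therefore obtain
\begin{equation*}
\ddda z(t_0) = \frac{1}{\Gamma(1-\alpha)}\left(- t_0^{-\alpha} z(0) - \alpha \int^{t_0}_0 (t_0-s)^{-\alpha-1} z(s)\, ds\right).
\end{equation*}
Since $\Gamma(1-\alpha) > 0$, $z(0) \ge 0$, $z(s) \ge 0$ for all $s$, and $(t_0-s)^{-\alpha-1} > 0$, every term on the right is $\le 0$, so $\ddda z(t_0) \le 0$, which gives $\ddda y(t_0) \le 0$.

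I expect the main obstacle to be the careful limiting argument in the integration by parts — specifically, confirming that the hypothesis $t^{1-\alpha}y' \in C[0,T]$ (rather than, say, $y \in C^1$) is exactly strong enough to (a) make the Caputo integral converge at $s = 0$ and (b) force the vanishing of the boundary term at $s \to t_0^-$ at the right rate. Everything else is a sign bookkeeping once the representation with the manifestly nonpositive kernel $-(t_0-s)^{-\alpha-1}$ is in hand. An alternative, essentially equivalent route would be to first establish the identity $\ddda y(t_0) = \frac{-\alpha}{\Gamma(1-\alpha)} \int_0^{t_0}(t_0-s)^{-\alpha-1}(y(t_0) - y(s))\,ds - \frac{t_0^{-\alpha}}{\Gamma(1-\alpha)}(y(t_0)-y(0))$ directly for all functions with the stated regularity and then simply plug in that $y(t_0)$ is the minimum; this isolates the analytic content (the identity) from the trivial positivity conclusion.
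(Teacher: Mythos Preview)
Your argument is correct and is essentially the classical proof from the cited reference~\cite{Lu1}: one reduces to a nonnegative function vanishing at $t_0$, integrates by parts to obtain the representation
\[
\ddda y(t_0)=\frac{1}{\Gamma(1-\alpha)}\left(-t_0^{-\alpha}\bigl(y(0)-y(t_0)\bigr)-\alpha\int_0^{t_0}(t_0-s)^{-\alpha-1}\bigl(y(s)-y(t_0)\bigr)\,ds\right),
\]
and reads off the sign. Note that the paper itself does not supply a proof of Lemma~\ref{l4.1}; it simply quotes the result from~\cite{Lu1} (remarking that the regularity there is even weaker), so there is nothing further to compare at the level of strategy.

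One small expository slip: your description of the lower boundary term is garbled. The contribution at $s=\ep$ is $-(t_0-\ep)^{-\alpha}z(\ep)$, not $-\ep^{-\alpha}z(\ep)$; as $\ep\downarrow 0$ this tends directly to $-t_0^{-\alpha}z(0)$ by continuity of $z$. Your final displayed formula is nonetheless correct, so this is only a matter of cleaning up the sentence, not the mathematics. The convergence checks (integrability of $s^{\alpha-1}$ near $0$, the $O(t_0-s)$ decay of $z$ near $t_0$ killing the $(t_0-s)^{-\alpha-1}$ singularity) are exactly the right ones and use the hypothesis $t^{1-\alpha}y'\in C[0,T]$ in the expected way.
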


In Lemma \ref{l4.1}, the assumption $t_0>0$ is essential. This lemma  was formulated and proved in \cite{Lu1} under a 
weaker 
regularity condition posed on the function $y$, but for our arguments  
we can assume that $y\in C[0,T]$ and 
$t^{1-\alpha}y' \in C[0,T]$.

Employing Lemma \ref{l4.1}, we now formulate and prove our key lemma that is a basis for 
further derivations in this paper.

\begin{lemma}[Positivity of a smooth solution]
\label{l4.2}
For $F\in L^2(0,T;L^2(\OOO))$ and $a\in H^1(\OOO)$, let $F(x,t) \ge 0,\ (x,t)\in \OOO\times (0,T)$, $a(x)\ge 0,\ x\in \OOO$, and 
$\min_{(x,t)\in \ooo{\OOO}\times 
[0,T]} b_0(x,t)$ be a sufficiently large positive constant.
Furthermore, we assume that there exists a solution 
$u\in C([0,T];C^2(\ooo{\OOO}))$
to the initial-boundary value problem
\begin{equation}
\label{(4.2)}
\left\{ \begin{array}{rl}
& \pppa (u-a) + A_1u = F(x,t), \quad x\in \OOO,\, 0<t<T, \\
& \NUNU u + \sigma(x)u = 0 \quad \mbox{on } \ppp\OOO\times (0,T),\\
& u(x,\cdot) - a(x) \in \HH(0,T) \quad \mbox{for almost all } x\in \OOO
\end{array} \right. 
\end{equation}
and $u$ satisfies the condition $t^{1-\alpha}\ppp_tu \in 
C([0,T];C(\ooo{\OOO}))$. 

Then the solution $u$ is non-negative: 
$$
u (x,t)\ge 0,\ (x,t)\in \OOO \times (0,T).   
$$
\end{lemma}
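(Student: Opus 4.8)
The strategy is a contradiction argument built on the extremum principle in Lemma \ref{l4.1}, applied pointwise in $x$ after localizing the spatial minimum. Suppose, contrary to the claim, that $u$ becomes negative somewhere in $\OOO\times(0,T)$. Since $u\in C([0,T];C^2(\ooo{\OOO}))$, the function $(x,t)\mapsto u(x,t)$ is continuous on the compact set $\ooo{\OOO}\times[0,T]$, so it attains a negative minimum value $-M<0$ at some point $(x_0,t_0)$. The first task is to show $t_0>0$: this follows from the initial condition, since $u(x,0)=a(x)\ge 0$ (using, via Lemma 4 referenced in the excerpt, that the generalized initial condition $u(x,\cdot)-a(x)\in\HH(0,T)$ yields $u(\cdot,0)=a$ in the appropriate sense when the data are smooth enough, which they are here). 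So $t_0\in(0,T]$.

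The second task is to locate $x_0$ relative to the boundary and handle both cases. If $x_0\in\OOO$ is an interior point, then $u(\cdot,t_0)$ has an interior spatial minimum at $x_0$, so $\nabla u(x_0,t_0)=0$ and the Hessian is positive semidefinite; by uniform ellipticity \eqref{(1.2)} this gives $\sumij\ppp_i(a_{ij}\ppp_j u)(x_0,t_0)\ge 0$ up to lower-order terms from the $C^1$ dependence of $a_{ij}$ — more carefully one should bound $\sumij a_{ij}(x_0)\ppp_{ij}u(x_0,t_0)\ge 0$ and absorb the $\ppp_i a_{ij}\ppp_j u$ terms (which vanish at $x_0$ since $\nabla u(x_0,t_0)=0$). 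Also $\sum_j b_j(x_0,t_0)\ppp_j u(x_0,t_0)=0$ and $-b_0(x_0,t_0)u(x_0,t_0)=b_0(x_0,t_0)M>0$. If $x_0\in\ppp\OOO$, the Robin condition is what saves the argument: at a boundary minimum the outward conormal derivative satisfies $\NUNU u(x_0,t_0)\le 0$ (Hopf-type reasoning, or directly since $u$ increases inward), hence from \eqref{(1.4)} we get $\sigma(x_0)u(x_0,t_0)=-\NUNU u(x_0,t_0)\ge 0$, i.e. $-\sigma(x_0)M\ge 0$; combined with $\sigma\ge 0$ and $M>0$ this forces $\sigma(x_0)=0$ and $\NUNU u(x_0,t_0)=0$, and one then argues (using that $u(\cdot,t_0)$ still attains its global minimum there) that the elliptic operator evaluated at $x_0$ is $\ge 0$ as in the interior case; alternatively one perturbs $x_0$ slightly inward. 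In either case we conclude $-A_1 u(x_0,t_0)>0$, i.e. the spatial operator applied to $u$ at $(x_0,t_0)$ is strictly positive, using crucially that $b_0$ is large.

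The third task closes the loop. Set $y(t):=u(x_0,t)$. The regularity hypotheses $u\in C([0,T];C^2(\ooo{\OOO}))$ and $t^{1-\alpha}\ppp_t u\in C([0,T];C(\ooo{\OOO}))$ give exactly $y\in C[0,T]$ and $t^{1-\alpha}y'\in C[0,T]$, so Lemma \ref{l4.1} applies: since $y$ attains its minimum over $[0,T]$ at $t_0\in(0,T]$ — note $y(t_0)=-M$ is the global spatio-temporal minimum value, hence a fortiori the minimum of $y$ over $[0,T]$ — we obtain $\ddda y(t_0)\le 0$. One must also check that on the smooth solution the distributional derivative $\pppa(u-a)$ agrees with the pointwise Caputo derivative $\ddda(u(x_0,\cdot)-a(x_0))=\ddda y(t_0)$, which is standard under the stated regularity. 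But evaluating the PDE \eqref{(4.2)} at $(x_0,t_0)$ gives
\[
\ddda y(t_0) = \pppa(u-a)(x_0,t_0) = -A_1 u(x_0,t_0) + F(x_0,t_0) > 0,
\]
since $-A_1 u(x_0,t_0)>0$ and $F(x_0,t_0)\ge 0$. This contradicts $\ddda y(t_0)\le 0$, so no negative minimum exists and $u\ge 0$ on $\OOO\times(0,T)$.

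**Main obstacle.** The delicate point is the boundary case $x_0\in\ppp\OOO$: one needs the Robin sign condition $\sigma\ge 0$ together with a Hopf-type lemma to rule out that the negative minimum hides on the boundary, and then still recover positivity of the elliptic term there. A clean way is to note that if the minimum is attained only on $\ppp\OOO$ one gets $\NUNU u(x_0,t_0)\le 0$ while the Robin condition forces $\NUNU u(x_0,t_0)=\sigma(x_0)M\ge 0$, hence both are zero; then the strong maximum principle for the elliptic part (or a direct second-order test after noting the minimum is also interior-attained in the limit) yields the needed strict inequality once $b_0$ is large. Verifying that $\pppa$ coincides with the pointwise $\ddda$ on the trajectory $y$, and that $t_0$ is genuinely positive, are the other points requiring care but are routine given the regularity assumptions and Lemma 4.
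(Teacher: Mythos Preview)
Your overall strategy is correct and can be completed, but it differs substantially from the paper's proof, and your handling of the boundary case $x_0\in\ppp\OOO$ is not yet a proof.

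The paper does not argue directly on $u$. It constructs an auxiliary barrier $\psi\in C^1([0,T];C^2(\ooo\OOO))$ satisfying $A_1\psi=1$ in $\OOO$ and $\NUNU\psi+\sigma\psi=1$ on $\ppp\OOO$ (this construction is nontrivial and occupies the entire Appendix; it is precisely here that the largeness of $\min b_0$ is used, via coercivity of $A_1$), and sets $w=u+\ep(M+\psi+t^{\alpha})$ with $M$ large. The perturbation makes both the interior inequality $\ddda w+A_1w>0$ and the boundary inequality $\NUNU w+\sigma w\ge\ep$ \emph{strict}. At a hypothetical negative minimum $(x_0,t_0)$ with $x_0\in\ppp\OOO$ one then gets $\NUNU w(x_0,t_0)\ge\ep-\sigma(x_0)w(x_0,t_0)>0$ strictly, and the paper exhibits an interior point $x_0-\ep_0\mathcal A(x_0)\nu(x_0)$ where $w$ is strictly smaller, contradicting minimality; no second-order information at $\ppp\OOO$ is needed. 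Finally $\ep\downarrow 0$.

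Your direct route avoids $\psi$ altogether, which is a genuine simplification, but the boundary step as written has a gap. After deducing $\sigma(x_0)=0$ and $\NUNU u(x_0,t_0)=0$, you claim the elliptic operator is $\ge 0$ ``as in the interior case''. Two things must be supplied. First, $\nabla u(x_0,t_0)=0$: the tangential gradient vanishes at a boundary minimum, so $\nabla u=c\,\nu$, and then $0=\NUNU u=c\sum_{i,j}a_{ij}\nu_i\nu_j$ forces $c=0$ by ellipticity. Second, positive semidefiniteness of the Hessian \emph{at a boundary point}: this is not the interior argument, since for tangential $\xi$ the segment $x_0+s\xi$ may leave $\ooo\OOO$; one must use a curve $\gamma(s)\subset\ppp\OOO$ with $\gamma(0)=x_0$, $\gamma'(0)=\xi$ and note that the curvature term $\gamma''(0)\cdot\nabla u(x_0,t_0)$ vanishes. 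Your alternative ``perturb $x_0$ slightly inward'' does not help, since the conormal derivative is zero. Once these two points are in place, your contradiction via Lemma~\ref{l4.1} goes through (and in fact only $b_0>0$, not $b_0$ large, is needed along your route).
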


For the partial differential equations of parabolic type
with the Robin boundary condition  ($\alpha=1$ in \eqref{(4.2)}), a similar positivity property is well-known.
However, it is worth mentioning  that the regularity of the solution to the problem \eqref{(4.2)} 
at the point $t=0$ is a more delicate question  compared to the one in the case 
$\alpha=1$. In particular, we cannot expect the inclusion $u(x,\cdot) \in 
C^1[0,T]$.  This can be illustrated by a simple example of the equation 
$\pppa y(t) = y(t)$ with $y(t)-1 \in \HH(0,T)$ whose unique solution 
$y(t) = \MLONE(t^{\alpha})$ does not belong to the space $C^1[0,T]$. 

\begin{proof}
First we introduce an auxiliary function $\psi \in C^1([0,T];C^2(\ooo{\OOO}))$ 
that satisfies the conditions
\begin{equation}
\label{(4.3)}
\left\{ \begin{array}{rl}
& A_1\psi(x,t) = 1, \quad (x,t) \in \OOO\times [0,T], \\
& \NUNU \psi + \sigma \psi = 1 \quad \mbox{on } \ppp\OOO\times [0,T].
\end{array}\right.
\end{equation}
Proving existence of such function $\psi$ is non-trivial. In this section, we focus on the proof of the lemma and then come back to the problem  \eqref{(4.3)} in Appendix.

Now, choosing $M>0$ sufficiently large and $\ep>0$ sufficiently small,
we set 
\begin{equation}
\label{(w_u)}
w(x,t) := u(x,t) + \ep(M + \psi(x,t) + t^{\alpha}), \quad x\in \OOO,\,
0<t<T.
\end{equation}

For a fixed $x\in \OOO$, by the assumption on the regularity 
of $u$, we have the inclusion
\begin{equation}\label{(3.6)}
t^{1-\alpha}\ppp_tu(x,\cdot) \in C[0,T].
\end{equation}
Then, $\ppp_tu(x,\cdot) \in L^1(0,T)$, that is, $u(x,\cdot)
\in W^{1,1}(0,T)$.  Moreover,
\begin{equation}\label{(3.7)}
u(x,0) - a(x) = 0, \quad x\in \OOO.
\end{equation}
           
On the other hand, for $w\in H_{\alpha}(0,T) \cap W^{1,1}(0,T)$ 
and $w(0) = 0$, the equality
$$
\pppa w = \ddda w = \ddda (w+c)
$$
holds true with any constant $c$ (see, e.g., Theorem 2.4 of Chapter 2 in \cite{KRY}).

Since $u(x,\cdot) - a \in H_{\alpha}(0,T)$ and $u(x,\cdot) \in W^{1,1}(0,T)$, by \eqref{(3.7)}, the relations
$\pppa (u-a) = \ddda (u-a) = \ddda u$ hold true for almost all $x\in \OOO$.

Furthermore, since $\ep(M+\psi(\cdot,t)+t^{\alpha}) \in W^{1,1}(0,T)$, 
we obtain
\begin{align*}
& \ddda w = \ddda (u + \ep(M+\psi(x,t)+t^{\alpha})))
= \ddda u + \ep\ddda (M+\psi(x,t)+t^{\alpha})\\
=& \pppa (u-a) + \ep(\ddda (\psi + t^{\alpha}))
= \pppa (u-a) + \ep(\ddda\psi + \Gamma(\alpha+1))
\end{align*}
and
\begin{align*}
& A_1w = A_1u + \ep A_1\psi + \ep A_1t^{\alpha} + \ep A_1M\\
=& A_1u + \ep + \ep b_0(x,t)t^{\alpha} + b_0(x,t)\ep M.
\end{align*}
 
Now we choose a constant $M>0$ such that $M + \psi(x,t) \ge 0$ and
$\ddda \psi(x,t) + b_0(x,t)M > 0$ for $(x,t) \in \ooo{\OOO} \times [0,T]$, so 
that 
\begin{equation}
\label{(4.4)}
\ddda w + A_1w 
\end{equation}
$$
= F + \ep(\Gamma(\alpha+1) + \ddda \psi + 1 
+ b_0(x,t)t^{\alpha} + b_0(x,t) M) > 0 \quad \mbox{in } \OOO\times (0,T).
$$
Moreover, because of the relation $\NUNU w = \NUNU u + \ep \NUNU \psi$, 
we obtain the following estimate:
$$
\NUNU w + \sigma w = \NUNU u + \sigma u + \ep + \sigma \ep t^{\alpha}
+ \ep\sigma M
$$
\begin{equation}
\label{(4.5)}
\ge \ep + \sigma\ep t^{\alpha} + \ep\sigma M \ge \ep \quad 
\mbox{on } \ppp\OOO\times (0,T).
\end{equation}
Evaluation of the representation \eqref{(w_u)} at the point $t=0$ immediately leads 
to the formula
$$
w(x,0) = u(x,0) + \ep(\psi(x,0) + M), \quad x\in \OOO.
$$

Let us assume that the inequality 
$$
\min_{(x,t)\in \ooo{\OOO}\times [0,T]} w(x,t) \ge 0
$$
does not hold valid, that is, there exists a point 
$(x_0,t_0) \in \ooo{\OOO}\times [0,T]$ such that 
\begin{equation}
\label{(4.7)}
w(x_0,t_0):= \min_{(x,t)\in \ooo{\OOO}\times [0,T]} w(x,t) < 0.
\end{equation}
Since $M>0$ is sufficiently large and $u(x,0)$ is non-negative, we obtain the inequality 
$$
w(x,0) = u(x,0) + \ep(\psi(x,0) + M) \ge u(x,0) \ge 0, \quad 
x\in \ooo{\OOO},
$$
and thus $t_0$ cannot be zero.

Next, we show that $x_0 \not\in \ppp\OOO$.  Indeed, let us assume that 
$x_0 \in \ppp\OOO$.  Then the estimate \eqref{(4.5)} yields that 
$\NUNU w(x_0,t_0) + \sigma(x_0)w(x_0,t_0) \ge \ep$.
By \eqref{(4.7)} and $\sigma(x_0)\ge 0$, we obtain 
$$
\NUNU w(x_0,t_0) \ge -\sigma(x_0)w(x_0,t_0) + \ep 
\ge \ep > 0,
$$
which implies
$$
\ppp_{\nu_A}w(x_0,t_0) = 
\sumij a_{ij}(x_0)\nu_j(x_0)\ppp_iw(x_0,t_0)
= \nabla w(x_0,t_0) \cdot \mathcal{A}(x_0)\nu(x_0)
$$
\begin{equation}
\label{(4.8)}
= \sum_{i=1}^d (\ppp_iw)(x_0,t_0)[\mathcal{A}(x_0)\nu(x_0)]_i > 0.
\end{equation}
Here $\mathcal{A}(x) = (a_{ij}(x))_{1\le i,j\le d}$ and 
$[b]_i$ means the $i$-th element of a vector $b$.

For sufficiently small $\ep_0>0$ and $x_0\in \ppp\OOO$, we now verify the inclusion
\begin{equation}
\label{(4.9)}
x_0 - \ep_0\mathcal{A}(x_0)\nu(x_0) \in \OOO.             
\end{equation}
Indeed, since the matrix 
$\mathcal{A}(x_0)$ is positive-definite, the inequality  
$$
(\nu(x_0)\, \cdot \, -\ep_0\mathcal{A}(x_0)\nu(x_0))
= -\ep_0(\mathcal{A}(x_0)\nu(x_0)\, \cdot \, \nu(x_0)) < 0
$$
holds true. In other words, the inequality
$$
\angle (\nu(x_0), \, (x_0 - \ep_0\mathcal{A}(x_0)\nu(x_0)) - x_0)
> \frac{\pi}{2}
$$
is satisfied. 
Because the boundary $\ppp\OOO$ is smooth, the domain $\OOO$ is locally 
located on 
one side of $\ppp\OOO$. In a small neighborhood  of the point 
$x_0\in \ppp\OOO$, the boundary $\ppp\OOO$ can 
be described in the local coordinates composed of 
its tangential component in $\R^{d-1}$
and the normal component along $\nu(x_0)$.  
Consequently, if $y \in \R^d$ satisfies the inequality
$\angle (\nu(x_0), y-x_0) > \frac{\pi}{2}$, then $y\in \OOO$.
Therefore, for a sufficiently small $\ep_0>0$, 
the point $x_0-\ep_0\mathcal{A}(x_0)\nu(x_0)$ is located in $\OOO$ and 
we have proved the inclusion \eqref{(4.9)}. 

Moreover, for sufficiently small $\ep_0>0$, we can prove that
\begin{equation}\label{(3.11a)}
w(x_0 - \ep_0\mathcal{A}(x_0)\nu(x_0),\,t_0) < w(x_0,t_0).
\end{equation}
Indeed, the inequality \eqref{(4.7)} yields
$$
\sum_{i=1}^d (\ppp_iw)(x_0-\eta\mathcal{A}(x_0)\nu(x_0),\,t_0)
[\mathcal{A}(x_0)\nu(x_0)]_i > 0 \quad\mbox{if }\vert \eta\vert < \ep_0.
$$
Then, by the mean value theorem, we obtain the inequality 
\begin{align*}
&w(x_0 - \xi\mathcal{A}(x_0)\nu(x_0),\,t_0) - w(x_0,t_0)\\
= & \xi\sum_{i=1}^d \ppp_iw(x_0 - \theta\mathcal{A}(x_0)\nu(x_0),\,t_0) 
(-[\mathcal{A}(x_0)\nu(x_0)]_i) < 0,
\end{align*}
where $\theta$ is a number between $0$ and $\xi\in (0,\ep_0)$.
Thus, the inequality \eqref{(3.11a)} is verified. 

By combining \eqref{(3.11a)} with \eqref{(4.9)}, we 
conclude that there exists a point $\www{x_0} \in \OOO$ such that the inequality 
$w(\www{x_0},t_0) < w(x_0,t_0)$ holds true, which contradicts the assumption 
\eqref{(4.7)}. Thus, 
we have proved that $x_0 \not\in \ppp\OOO$.

According to \eqref{(4.7)}, the function $w$ attains its minimum 
at the point $(x_0,t_0)$. Because $0 < t_0 \le T$, 
Lemma \ref{l4.1} yields the inequality 
\begin{equation}
\label{(4.10)}
\ddda w(x_0,t_0) \le 0.                 
\end{equation}
Since $x_0 \in \OOO$, the necessary condition for an extremum point leads to the equality
\begin{equation}
\label{(4.11)}
\nabla w(x_0,t_0) = 0.                      
\end{equation}
Moreover, because the function $w$ attains its minimum at the point $x_0 \in \OOO$, 
in view of the sign of the Hessian, the inequality
\begin{equation}
\label{(4.12)}
\sumij a_{ij}(x_0)\ppp_i\ppp_j w(x_0,t_0) \ge 0   
\end{equation}
holds true (see, e.g., the proof of Lemma 1 in Section 1 of Chapter 2 in 
\cite{Fr}).

The inequalities $b(x_0,t_0)>0$, $w(x_0,t_0) < 0$, and
\eqref{(4.10)}-\eqref{(4.12)} lead to the estimate  
\begin{align*}
& \ddda w(x_0,t_0) + A_1w(x_0,t_0)\\
= & \ddda w (x_0,t_0) - \sumij a_{ij}(x_0)\ppp_i\ppp_jw(x_0,t_0)
- \sum_{i=1}^d (\ppp_ia_{ij})(x_0)\ppp_jw(x_0,t_0)\\
-& \sum_{i=1}^d b_i(x_0,t_0)\ppp_iw(x_0,t_0) + b(x_0,t_0)w(x_0,t_0) < 0,
\end{align*}
which contradicts the inequality \eqref{(4.4)}.

Thus, we have proved that 
$$
u(x,t) + \ep(M+\psi(x,t)+t^{\alpha}) = w(x,t) \ge 0, \quad 
(x,t) \in \OOO\times (0,T).
$$
Since $\ep>0$ is arbitrary, we let $\ep \downarrow 0$ to obtain the inequality 
$u(x,t) \ge 0$ for $(x,t) \in \OOO\times (0,T)$ and 
 the proof of Lemma \ref{l4.2} is completed. 
\end{proof}

Let us finally mention that the positivity of the function $b_0$ 
from the definition of the operator $-A_1$ 
is an essential condition for validity of our proof of Lemma \ref{l4.2}. 
However, in the next 
section, we remove this condition while deriving the comparison principles for 
the solutions to the initial-boundary value problem \eqref{(2.3)}.
%
%
%
\section{Comparison principles} 
\label{sec4}

\setcounter{section}{4}
\setcounter{equation}{0}

According to the results formulated in Theorem \ref{t2.1}, in this section, we 
consider the solutions to the initial-boundary value problem \eqref{(2.3)} 
that belong to the following space of functions:
\begin{equation}\label{(4.1a)}
\mathcal{Y}_\alpha := \{ u; \, u-a\in \HH(0,T;L^2(\OOO)), \, u\in L^2(0,T;H^2(\OOO))\}.
\end{equation}
In what follows, by $u(F,a)$ we denote the solution to the problem \eqref{(2.3)} with the initial data $a$ and the source function $F$.

Our first result concerning the comparison
principles for the solutions to the initial-boundary value problems for the linear time-fractional diffusion equation is presented in the next theorem. 

\begin{theorem}
\label{t2.2}
Let the functions $a \in H^1(\OOO)$ and $F \in L^2(\OOO\times (0,T))$ satisfy the inequalities $F(x,t) \ge 0,\ (x,t)\in \OOO\times (0,T)$ and
$a(x) \ge 0,\ x\in \OOO$, respectively. 

Then the solution $u(F,a) \in \mathcal{Y}_\alpha$ 
to the initial-boundary value problem \eqref{(2.3)}  is 
non-negative, e.g., the inequality 
$$
u(F,a)(x,t) \ge 0,\ (x,t)\in \OOO\times (0,T)
$$
holds true. 
\end{theorem}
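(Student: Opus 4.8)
The plan is to reduce Theorem~\ref{t2.2} to the key Lemma~\ref{l4.2} by an approximation argument in two directions: first removing the sign restriction on the zeroth-order coefficient that is built into the operator $A_1$, and second relaxing the regularity hypotheses $u\in C([0,T];C^2(\ooo{\OOO}))$ and $t^{1-\alpha}\ppp_t u\in C([0,T];C(\ooo{\OOO}))$ that Lemma~\ref{l4.2} requires but that a generic solution $u(F,a)\in\mathcal{Y}_\alpha$ need not satisfy.

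First I would handle the coefficient. For the exact problem \eqref{(2.3)} the zeroth-order coefficient is $c(x,t)$, which need not be $\le 0$. The standard device is to substitute $u(x,t)=e^{\la t}v(x,t)$ (or, adapted to the fractional setting, to absorb a constant shift), so that $v$ solves an equation with operator $A_1$ of the form \eqref{(3.1a)} whose coefficient $b_0$ is as large and positive as desired; since $e^{\la t}>0$, positivity of $v$ is equivalent to positivity of $u$. Care is needed because $\pppa$ does not commute with multiplication by $e^{\la t}$ the way $\ppp_t$ does, so the cleaner route is: pick $b_0>0$ with $\min b_0$ sufficiently large, write $-A_1 v = -A_0 v + (\text{l.o.t.})$, observe that $c + b_0$ may be arranged of either sign, and instead run the whole contraction/Duhamel machinery of Theorem~\ref{t2.1} with $A_1$ in place of $A$. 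The resulting solution coincides with $u(F,a)$ by uniqueness, so it suffices to prove $u(F,a)\ge 0$ under the standing hypothesis that the zeroth-order coefficient is $\ge$ a large positive constant, i.e.\ exactly the setting of Lemma~\ref{l4.2}.

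Next I would regularize the data. Given $a\in H^1(\OOO)$ with $a\ge 0$ and $F\in L^2(\QQQQ)$ with $F\ge 0$, choose sequences $a_k\in C^\infty(\ooo{\OOO})$, $F_k\in C^\infty(\ooo{\OOO}\times[0,T])$ with $a_k\ge 0$, $F_k\ge 0$, compatible with the Robin boundary condition, such that $a_k\to a$ in $H^1(\OOO)$ and $F_k\to F$ in $L^2(\QQQQ)$. For smooth compatible data one expects the solution $u_k:=u(F_k,a_k)$ to enjoy the extra regularity demanded in Lemma~\ref{l4.2}: spatial $C^2$-smoothness follows from elliptic regularity applied to $A_1 u_k = F_k - \pppa(u_k-a_k)$, and the temporal regularity $t^{1-\alpha}\ppp_t u_k\in C([0,T];C(\ooo{\OOO}))$ follows from the Duhamel representation \eqref{(5.7)} together with the estimates \eqref{(5.3)} and the smoothing of the Mittag-Leffler kernels (the worst singularity is $t^{\alpha-1}$, which is exactly cancelled by the weight $t^{1-\alpha}$). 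Granting this, Lemma~\ref{l4.2} gives $u_k(x,t)\ge 0$ on $\OOO\times(0,T)$ for every $k$. Finally, by the stability estimate in Theorem~\ref{t2.1} applied to $u_k-u(F,a)=u(F_k-F,\,a_k-a)$, we have $u_k\to u(F,a)$ in $L^2(0,T;H^2(\OOO))$, hence (after passing to a subsequence) pointwise a.e.\ in $\QQQQ$; passing to the limit preserves the inequality, so $u(F,a)\ge 0$ a.e., and then everywhere by continuity in the appropriate sense.

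The main obstacle I expect is the middle step: verifying that the solution associated with smooth, compatible data actually lies in $C([0,T];C^2(\ooo{\OOO}))$ \emph{and} satisfies $t^{1-\alpha}\ppp_t u_k\in C([0,T];C(\ooo{\OOO}))$. Spatial regularity is routine elliptic theory, but the sharp temporal bound near $t=0$ is delicate precisely because, as the authors note with the example $\pppa y = y$, one cannot expect $u_k(x,\cdot)\in C^1[0,T]$ — the $t^{1-\alpha}$ weight is essential and its justification requires care with the $K(t)$-term in \eqref{(5.7)} and the lower-order perturbation $Qu_k$. An alternative that sidesteps part of this is to regularize only $a$ and $F$ enough to land in $\mathcal{D}(A_0^{\gamma})$ for a suitable $\gamma$ and quote the corresponding smoothing estimates; either way, the crux is controlling the behaviour of the solution at $t=0$ well enough to invoke the extremum principle of Lemma~\ref{l4.1}.
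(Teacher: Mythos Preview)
Your treatment of the zeroth-order coefficient contains a genuine gap. You correctly note that the exponential substitution $u=e^{\lambda t}v$ fails for the fractional derivative, but your proposed alternative --- rewriting the equation with $A_1$ in place of $A$ and invoking uniqueness --- does not accomplish the reduction you claim. When you move from $A$ to $A_1$ the equation becomes
$$
\pppa(u-a)+A_1u=F+(b_0+c(x,t))u,
$$
and the right-hand side now depends on the unknown $u$; you cannot simply declare that ``it suffices'' to work under the standing hypothesis of a large positive zeroth-order coefficient, because the term $(b_0+c)u$ is neither known in advance nor known to be non-negative until you already have $u\ge 0$. The paper closes this gap by a monotone iteration (its Part~III): set $u_1=a\ge 0$ and define $u_{n+1}$ as the solution of $\pppa(u_{n+1}-a)+A_1u_{n+1}=(b_0+c)u_n+F$ with the Robin condition. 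If $u_n\ge 0$ then the right-hand side is $\ge 0$ (since $b_0\ge\Vert c\Vert_{C(\ooo{\OOO}\times[0,T])}$), so the already-established positivity for $A_1$ gives $u_{n+1}\ge 0$; one then shows $u_n\to u$ in $L^\infty(0,T;H^1(\OOO))$ by iterating a Gronwall-type bound on $w_{n+1}=u_{n+1}-u_n$ and passes to the limit. This iterative device is the missing idea in your first step.

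Your regularization step (smooth data $\Rightarrow$ Lemma~\ref{l4.2} applies $\Rightarrow$ pass to the limit via the stability estimate of Theorem~\ref{t2.1}) matches the paper's Parts~I--II, and you have correctly flagged the verification of $t^{1-\alpha}\ppp_t u_k\in C([0,T];C(\ooo{\OOO}))$ as the delicate point; the paper devotes a separate lemma to this, taking $a\in C_0^\infty(\OOO)$ and $F\in C_0^\infty(\OOO\times(0,T))$ and analysing $A_0u'(t)$ through a differentiated Duhamel representation together with the estimates \eqref{(5.3)}.
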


Let us emphasize that the non-negativity of the solution $u$ to the problem \eqref{(2.3)}  holds true for the  space $ \mathcal{Y}_\alpha$ 
and thus $u$ does not necessarily satisfy the inclusions 
$u \in C([0,T];C^2(\ooo{\OOO}))$ and $t^{1-\alpha}\ppp_tu \in 
C([0,T]; C(\ooo{\OOO}))$. Therefore, Theorem \ref{t2.2} is widely applicable. Before presenting its proof, let us discuss one of its corollaries in form of a comparison property:

\begin{corollary}
\label{c2.1}
Let $a_1, a_2  \in H^1(\OOO)$ and $F_1, F_2 \in L^2(\OOO\times (0,T))$
satisfy the inequalities $a_1(x) \ge a_2(x),\ x\in \OOO$ and 
$F_1(x,t) \ge F_2(x,t), \ (x,t)\in \OOO\times (0,T)$, respectively. 

Then the inequality 
$$
u(F_1, a_1)(x,t) \ge u(F_2,a_2)(x,t), \ (x,t)\in \OOO\times (0,T)
$$
holds true.
\end{corollary}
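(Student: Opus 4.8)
The plan is to reduce Corollary \ref{c2.1} to Theorem \ref{t2.2} by exploiting the linearity of the initial-boundary value problem \eqref{(2.3)} with respect to the data pair $(F,a)$. First I would set $u_1 := u(F_1,a_1)$ and $u_2 := u(F_2,a_2)$, both of which belong to $\mathcal{Y}_\alpha$ (with the respective initial data $a_1$ and $a_2$) by Theorem \ref{t2.1}, and consider their difference $v := u_1 - u_2$. Since $\HH(0,T;L^2(\OOO))$ and $L^2(0,T;H^2(\OOO))$ are linear spaces, $v \in L^2(0,T;H^2(\OOO))$ and $v - (a_1-a_2) = (u_1 - a_1) - (u_2 - a_2) \in \HH(0,T;L^2(\OOO))$, so $v$ belongs to the class $\mathcal{Y}_\alpha$ associated with the initial datum $a_1 - a_2$. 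Using that $\pppa = (J^\alpha)^{-1}$ is a linear operator and that $A$ is a linear differential operator, subtracting the equations and the boundary conditions satisfied by $u_1$ and $u_2$ gives
\[
\pppa\!\left(v - (a_1-a_2)\right) + Av = F_1 - F_2 \ \text{ in } \OOO\times(0,T), \qquad \NUNU v + \sigma v = 0 \ \text{ on } \ppp\OOO\times(0,T),
\]
together with $v(x,\cdot) - (a_1-a_2)(x) \in \HH(0,T)$ for almost all $x\in\OOO$. Hence $v$ is a solution of \eqref{(2.3)} with source term $F_1 - F_2$ and initial datum $a_1 - a_2$, and by the uniqueness part of Theorem \ref{t2.1} we conclude $v = u(F_1-F_2,\,a_1-a_2)$.

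The second step is immediate: the hypotheses $a_1 \ge a_2$ in $\OOO$ and $F_1 \ge F_2$ in $\OOO\times(0,T)$ mean that $a_1 - a_2 \ge 0$ in $\OOO$ and $F_1 - F_2 \ge 0$ in $\OOO\times(0,T)$, and moreover $a_1 - a_2 \in H^1(\OOO)$ and $F_1 - F_2 \in L^2(\OOO\times(0,T))$, so all the assumptions of Theorem \ref{t2.2} are met for the data $(F_1-F_2,\,a_1-a_2)$. Applying Theorem \ref{t2.2} to $v = u(F_1-F_2,\,a_1-a_2)$ yields $v(x,t) \ge 0$ for $(x,t)\in\OOO\times(0,T)$, which is exactly the asserted inequality $u(F_1,a_1)(x,t) \ge u(F_2,a_2)(x,t)$.

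I do not expect a genuine obstacle here; the proof is essentially a one-line linearity argument followed by an application of the main theorem. The only points that need a little care are: (a) checking that membership in $\mathcal{Y}_\alpha$ transfers correctly under the shift of the initial datum, which holds because the defining condition $u - a \in \HH(0,T;L^2(\OOO))$ is affine in $a$; and (b) making sure that the subtraction of the two equations is legitimate, i.e. that for solutions in $\mathcal{Y}_\alpha$ all the terms $\pppa(u_i - a_i)$, $Au_i$, and the traces $\NUNU u_i$ are well-defined in the relevant spaces — both are guaranteed by Theorem \ref{t2.1}. If one wishes, the corollary can be stated slightly more precisely as the linearity identity $u(F_1,a_1) - u(F_2,a_2) = u(F_1-F_2,\,a_1-a_2)$, from which the order-preserving property follows by Theorem \ref{t2.2} exactly as above.
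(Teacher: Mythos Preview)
Your proof is correct and follows essentially the same approach as the paper: form the difference $u(F_1,a_1)-u(F_2,a_2)$, observe that it solves \eqref{(2.3)} with data $(F_1-F_2,a_1-a_2)\ge 0$, and apply Theorem~\ref{t2.2}. You are slightly more explicit about the function-space checks and the use of uniqueness from Theorem~\ref{t2.1}, but the argument is the same.
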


\begin{proof}
Setting $a:= a_1-a_2$, $F:= F_1 - F_2$ and $u:= u(F_1,a_1) - u(F_2,a_2)$, 
we immediately obtain the inequalities $a(x)\ge 0,\ x\in \OOO$ and $F(x,t)\ge 0, \ (x,t)\in \QQQQ$ and 
$$
\left\{ \begin{array}{rl}
& \pppa (u-a) + Au = F \ge 0 \quad \mbox{in } \QQQQ, \\
& \NUNU u + \sigma u = 0 \quad \mbox{on } \ppp\OOO.
\end{array}\right.
$$
Therefore, Theorem \ref{t2.2} implies that $u(x,t)\ge 0, \ (x,t)\in \OOO\times (0,T)$, that is,
$u(F_1, a_1)(x,t) \ge u(F_2,a_2)(x,t), \ (x,t)\in \OOO\times (0,T)$.
\end{proof}

In its turn, Corollary \ref{c2.1} can be applied for derivation of the lower and upper bounds for the solutions to the initial-boundary value problem \eqref{(2.3)} by  suitably choosing the  initial values and the source functions. Let us  demonstrate this technique on an example.
\begin{example}
\label{ex1}
Let the coefficients $a_{ij}, b_j$, $1\le i,j\le d$   of the operator 
$$
-Av(x) = \sum_{i,j=1}^d \ppp_i(a_{ij}(x)\ppp_jv(x)) + \sum_{j=1}^d
b_j(x,t)\ppp_jv(x)
$$
from the initial-boundary value problem \eqref{(2.3)}  satisfy the conditions \eqref{(1.2)}. Now we consider the homogeneous initial condition $a(x)=0,\ x\in \OOO$  and assume that the source function 
$F \in L^2(0,T;L^2(\OOO))$ satisfies the inequality
$$
F(x,t) \ge \delta t^{\beta}, \quad x\in \OOO,\, 0<t<T
$$
with certain  constants $\beta \ge 0$ and $\delta>0$.

Then the solution $u(F,0)$ can be estimated from below as follows:
\begin{equation}\label{(4.2a)}
u(F,0)(x,t) \ge \frac{\delta\Gamma(\beta+1)}{\Gamma(\alpha+\beta+1)}
t^{\alpha+\beta}, \quad x\in \OOO, \, 0\le t \le T.
\end{equation}
Indeed, it is easy to verify that the function
$$
\underline{u}(x,t):= \frac{\delta\Gamma(\beta+1)}{\Gamma(\alpha+\beta+1)}
t^{\alpha+\beta}, \quad x\in \OOO, \, t>0
$$
is a solution to the following problem:
$$
\left\{\begin{array}{rl}
& \pppa \underline{u} + A\underline{u} = \delta t^{\beta} \quad \mbox{in $\OOO 
\times (0,T)$}, \\
& \ppp_{\nu_A}\underline{u} = 0 \quad \mbox{on $\ppp\OOO \times 
(0,T)$}, \\
& \underline{u}(x,\cdot) \in H_{\alpha}(0,T).
\end{array}\right.
$$
Due to the inequality  $F(x,t) \ge \delta t^{\beta},\ (x,t) \in \OOO \times (0,T)$, we can apply Corollary \ref{c2.1}
to the solutions $u$ and $\underline{u}$ and the inequality  \eqref{(4.2a)} immediately follows.

In particular, for the spatial 
dimensions $d \le 3$,  the Sobolev embedding theorem leads to the inclusion $u  \in L^2(0,T;H^2(\OOO)) \subset 
L^2(0,T;C(\ooo{\OOO}))$ and thus the strict inequality $u(F,0)(x,t) > 0$ holds true for almost all
$t>0$ and all $x\in \ooo{\OOO}$.
\end{example}

Now we proceed to the proof of Theorem \ref{t2.2}.

\begin{proof}
In the proof, we employ the operators $Qv(t)$ and $G(t)$ defined by \eqref{(5.8)}.
In terms of these operators, the solution $u(t):= u(F,a)(t)$ to the initial-boundary problem \eqref{(2.3)} satisfies 
the integral equation
\begin{equation}
\label{(6.1)}
u(F,a)(t) = G(t) + \int^t_0 K(t-s)Qu(s) ds, \quad 0<t<T.   
\end{equation}

For readers' convenience, we split the proof into three parts.
\vspace{0.3cm}

\noindent
I. First part of the proof: existence of a smoother solution.

\vspace{0.3cm}

In the formulation of Lemma \ref{l4.2}, 
we assumed existence of a solution $u\in C([0,T];C^2(\ooo{\OOO}))$
to the initial-boundary value problem \eqref{(4.2)} satisfying the inclusion $t^{1-\alpha}\ppp_tu \in C([0,T];C(\ooo{\OOO}))$.
On the other hand, Theorem \ref{t2.1} asserts the unique existence  
of solution $u$ to the initial-boundary value problem \eqref{(2.3)} from the space $\mathcal{Y}_\alpha$, i.e., of the solution $u$ 
that satisfies the inclusions $u\in L^2(0,T;H^2(\OOO))$ and $u - a 
\in H_{\alpha}(0,T;L^2(\OOO))$.  

In this part of the proof, we show that for $a \in C^{\infty}_0(\OOO)$ and $F \in C^{\infty}_0(\OOO\times
(0,T))$, the  
solution to the problem \eqref{(2.3)} satisfies the regularity assumptions formulated in Lemma 3.

More precisely, we first prove the following lemma:

\begin{lemma}
\label{l6.1}
Let $a_{ij}$, $b_j$, $c$ satisfy the conditions \eqref{(1.2)} and 
the inclusions 
$a\in C^{\infty}_0(\OOO)$, $F \in C^{\infty}_0(\OOO\times (0,T))$ hold true.

Then the solution $u=u(F,a)$ to the problem \eqref{(2.3)} satisfies the inclusions
$$
u \in C([0,T];C^2(\ooo{\OOO})), \quad 
t^{1-\alpha}\ppp_tu \in C([0,T];C(\ooo{\OOO}))
$$
and $\lim_{t\to 0} \Vert u(t) - a\Vert_{L^2(\OOO)} = 0$.
\end{lemma}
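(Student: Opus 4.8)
The plan is to improve the regularity of the solution $u=u(F,a)$ given by the representation \eqref{(6.1)}, namely $u(t)=G(t)+\int_0^t K(t-s)Qu(s)\,ds$, by a bootstrap argument in the scale of the fractional powers $\DDD(A_0^\gamma)$. First I would observe that, since $a\in C_0^\infty(\OOO)$ and $F\in C_0^\infty(\OOO\times(0,T))$, we have $a\in\DDD(A_0^\gamma)$ and $F(t)\in\DDD(A_0^\gamma)$ for every $\gamma\ge 0$, with the corresponding norms bounded on $[0,T]$; here one uses that $a$ and $F(\cdot,t)$ have compact support in $\OOO$, so the boundary condition defining $\DDD(A_0)$ is automatically satisfied by all powers of $A_0$ applied to them. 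Then I would run the contraction argument from the proof of Theorem \ref{t2.1} not in $L^2(0,T;H^2(\OOO))$ but in $L^2(0,T;\DDD(A_0^{1+\gamma}))$ (equivalently $L^2(0,T;H^{2+2\gamma})$ on the relevant scale), using the smoothing estimates \eqref{(5.3)} for $A_0^\gamma S(t)$ and $A_0^\gamma K(t)$ together with the fact that the lower-order operator $Q(t)$ maps $\DDD(A_0^{(\beta+1)/2})$ boundedly into $\DDD(A_0^{\beta/2})$ for each $\beta$ (the coefficients $b_j,c$ being $C^1$ in space, by \eqref{(1.2)}). The same Mittag-Leffler bound \eqref{(5.4)} and the gamma-function asymptotics used before show that a high power of the solution operator is a contraction on each of these spaces, so $u\in L^2(0,T;\DDD(A_0^{1+\gamma}))$ for all $\gamma\ge 0$; choosing $2\gamma>d/2+2$ and invoking the Sobolev embedding $H^{2+2\gamma}(\OOO)\hookrightarrow C^2(\ooo\OOO)$ gives $u\in L^2(0,T;C^2(\ooo\OOO))$.

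Next I would upgrade time regularity. From \eqref{(6.1)} and \eqref{(5.1)}--\eqref{(5.2)}, writing $\Phi(s):=F(s)+Qu(s)$, we have $u(t)=S(t)a+\int_0^t K(t-s)\Phi(s)\,ds$, and by the previous step $\Phi\in L^2(0,T;\DDD(A_0^{\gamma}))$ for all $\gamma$. I would then estimate $\ppp_t u$ componentwise in the eigenbasis: using $\frac{d}{dt}\bigl(t^{\alpha-1}\MLTWO(-\la_n t^\alpha)\bigr)$ and $\frac{d}{dt}\MLONE(-\la_n t^\alpha)=-\la_n t^{\alpha-1}\MLTWO(-\la_n t^\alpha)$ from \eqref{(2.9a)}, together with the bound $|\MLONE(-\la_n t^\alpha)|,|\MLTWO(-\la_n t^\alpha)|\le C(1+\la_n t^\alpha)^{-1}$, one obtains that $\ppp_t S(t)a$ behaves like $t^{\alpha-1}$ times a bounded function of $A_0a$, and a differentiation of the convolution term (splitting off the boundary term at $s=t$ and differentiating under the integral) produces at worst a $t^{\alpha-1}$ singularity controlled by $\sup_t\|A_0^\gamma\Phi(t)\|$. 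Multiplying by $t^{1-\alpha}$ removes the singularity, so $t^{1-\alpha}\ppp_t u\in C([0,T];\DDD(A_0^{\gamma}))$ for $\gamma$ large, hence in $C([0,T];C(\ooo\OOO))$ by Sobolev embedding; continuity up to $t=0$ follows because each summand is continuous and the series converges uniformly thanks to the $\la_n^{-\gamma}$ gain from the high power of $A_0$. The limit $\lim_{t\to0}\|u(t)-a\|_{L^2(\OOO)}=0$ then follows from $u(t)-a=(S(t)a-a)+\int_0^t K(t-s)\Phi(s)\,ds$: the first term tends to $0$ in $L^2$ since $\MLONE(-\la_n t^\alpha)\to 1$ for each $n$ and $|\MLONE|\le 1$ (dominated convergence on $\ell^2$), and the second is $O(t^{\alpha})\|\Phi\|$ by \eqref{(5.3)} with $\gamma=0$.

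The main obstacle I anticipate is the behavior at $t=0$ in the time-regularity claim: the natural bound on $\ppp_t u$ is only $O(t^{\alpha-1})$, which is not integrable improvement on its own, so one must be careful that after multiplication by $t^{1-\alpha}$ the resulting family is not merely bounded but genuinely \emph{continuous} at $t=0$, including the convolution term where the integrand is singular at both endpoints. Resolving this requires rewriting $t^{1-\alpha}\frac{d}{dt}\int_0^t K(t-s)\Phi(s)\,ds$ in a form where the limit as $t\to0$ can be identified — e.g. by the substitution $s=t\tau$ and dominated convergence, using that $\Phi$ is continuous in $t$ with values in a high-order space — and then patching with the $S(t)a$ term, whose $t^{1-\alpha}\ppp_t$ limit is $\Gamma(\alpha)^{-1}(-A_0 a)$ componentwise. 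A secondary, more routine, point is bookkeeping the Sobolev exponents so that the fixed-point space embeds into $C^2(\ooo\OOO)$ for $d\le 3$; this is harmless since we may take $\gamma$ as large as needed.
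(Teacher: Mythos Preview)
Your bootstrap has a genuine gap at the claim that $Q(t)$ maps $\DDD(A_0^{(\beta+1)/2})$ boundedly into $\DDD(A_0^{\beta/2})$ for every $\beta$. Once $\beta/2\ge 3/4$, membership in $\DDD(A_0^{\beta/2})$ encodes the Robin boundary condition $\ppp_{\nu_A}w+\sigma w=0$ on $\ppp\OOO$ (and further compatibility conditions for higher powers), and there is no reason $Q(t)v=\sum_j b_j\ppp_j v+(c_0+c)v$ satisfies these even when $v$ does: the first-order part $\sum_j b_j\ppp_j v$ does not vanish on $\ppp\OOO$. Separately, condition \eqref{(1.2)} gives only $b_j,c\in C([0,T];C^2(\ooo\OOO))$, so even at the level of raw Sobolev regularity $Q$ cannot map into $H^k(\OOO)$ for $k>2$; your target $\DDD(A_0^{1+\gamma})$ with $2\gamma>d/2$ is therefore out of reach through $Q$. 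The same obstruction propagates to your time-regularity step, since it relies on $\Phi=F+Qu$ lying in high-order domain spaces. (A related limitation affects $a$ and $F$ as well: with $a_{ij}\in C^1(\ooo\OOO)$ only, iterated applications of $A_0$ to $a$ need not stay in $H^2$, so $a\in\DDD(A_0^\gamma)$ for \emph{all} $\gamma$ is not automatic.)

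The paper circumvents this by never applying more than $A_0^{1/2}$ or $A_0^{3/4-\ep_0}$ directly to $Qu$. It first bootstraps only to $u\in C([0,T];\DDD(A_0^{3/2}))\subset C([0,T];H^3(\OOO))$, using that $\DDD(A_0^{3/4-\ep_0})\subset H^{3/2-2\ep_0}(\OOO)$ carries no boundary condition; then obtains $t^{1-\alpha}A_0u'\in C([0,T];L^2(\OOO))$ by differentiating the integral equation and running a fixed-point argument in a space with the weighted time-derivative norm built in; then reaches $A_0u\in C([0,T];H^2(\OOO))$ via an integration-by-parts identity $\int_0^t K(s)Q(t-s)u(t-s)\,ds=A_0^{-1}Q(t)u(t)-A_0^{-1}S(t)Q(0)a-\int_0^t A_0^{-1}S(s)(Q'u+Qu')\,ds$, so that again only $A_0^{1/2}$ lands on the $Q$-terms; and finally, instead of pushing further in the $\DDD(A_0^\gamma)$ scale, applies the \emph{Schauder} estimate to the elliptic problem $A_0u(\cdot,t)=h\in C^\theta(\ooo\OOO)$ (using $H^2(\OOO)\hookrightarrow C^\theta(\ooo\OOO)$ for $d\le 3$) to conclude $u\in C([0,T];C^{2+\theta}(\ooo\OOO))$. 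Your argument for $\lim_{t\to0}\Vert u(t)-a\Vert=0$ is essentially the paper's and is fine.
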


\begin{proof}
We recall that $c_0>0$ is a positive fixed constant and 
$$
-A_0v = \sumij \ppp_i(a_{ij}(x)\ppp_jv) - c_0v,\ \DDD(A_0) = \{ v \in H^2(\OOO);\, \NUNU v + \sigma v = 0 \,\,
\mbox{on } \ppp\OOO\}.
$$
Then $\DDD(A_0^{\hhalf}) = H^1(\OOO)$ and
$\Vert A_0^{\hhalf}v\Vert \sim \Vert v\Vert_{H^1(\OOO)}$
(\cite{Fu}).  Moreover, for the operators 
$S(t)$ and 
$K(t)$ defined by \eqref{(5.1)} and \eqref{(5.2)}, the estimates \eqref{(5.3)} hold true.

In what follows, we denote  $ \frac{\ppp u}{\ppp t}(\cdot,t)$ by $u'(t) = \frac{du}{dt}(t)$
if there is no fear of confusion.

The solution $u$ to the integral equation \eqref{(6.1)} can be constructed 
as a fixed point of the equation
\begin{equation}
\label{(6.2)}
A_0u(t) = A_0G(t) + \int^t_0 A_0^{\hhalf}K(t-s)A_0^{\hhalf}Qu(s) ds,
\quad 0<t<T.                                 
\end{equation}

As already proved, this fixed point satisfies the inclusion  
$u\in L^2(0,T;H^2(\OOO))$ $ \cap $ $ (\HH(0,T;L^2(\OOO)) + \{ a\}).$

Now we derive some estimates for the norms 
$\Vert A_0^{\kappa}u(t)\Vert$,
$\kappa=1,2$ and $\Vert A_0u'(t)\Vert$ for $0<t<T$.
First we set 
$$
D:= \sup_{0<t<T} (\Vert A_0F(t)\Vert + \Vert A_0F'(t)\Vert 
+ \Vert A_0^2F(t)\Vert) + \Vert a\Vert_{H^4(\OOO)}.       
$$
Since $F\in C^{\infty}_0(\OOO\times (0,T))$, we obtain the inclusion 
$F\in L^{\infty}(0,T;\DDD(A_0^2))$ and the inequality $D < +\infty$.
Moreover, in view of \eqref{(5.3)}, for $\kappa=1,2$, we get the estimates
\begin{align*}
& \left\Vert A_0^{\kappa}\int^t_0 K(t-s)F(s) ds \right\Vert
\le C\int^t_0 \Vert K(t-s)\Vert \Vert A_0^{\kappa}F(s)\Vert ds\\
\le& C\left( \int^t_0 (t-s)^{\alpha-1}ds \right) 
\sup_{0<s<T} \Vert A_0^{\kappa}F(s)\Vert \le CD,
\end{align*}
\begin{align*}
& \left\Vert A_0\frac{d}{dt}\int^t_0 K(t-s)F(s) ds \right\Vert
= \left\Vert A_0\frac{d}{dt}\int^t_0 K(s)F(t-s) ds \right\Vert\\
=& \left\Vert A_0K(t)F(0) + A_0\int^t_0 K(s)F'(t-s) ds \right\Vert\\
\le& C\left\Vert A_0\int^t_0 K(s)F'(t-s) ds \right\Vert
\le C \int^t_0 s^{\alpha-1}
\Vert A_0F'(t-s) \Vert ds < CD.
\end{align*}
The regularity conditions \eqref{(1.2)} lead to the estimates
$$
\Vert A_0^{\hhalf}Q(s)u(s)\Vert \le C\Vert Q(s)u(s)\Vert_{H^1(\OOO)}
= C\left\Vert \sum_{j=1}^d b_j(s)\ppp_ju(s) + (c_0+c(s))u(s)
\right\Vert_{H^1(\OOO)}
$$
\begin{equation}
\label{(6.4)}
\le C\Vert u(s)\Vert_{H^2(\OOO)} \le C\Vert A_0u(s)\Vert,\quad 
0<s<T.      
\end{equation}
Moreover, 
$$
\Vert A_0S(t)a\Vert = \Vert S(t)A_0a\Vert \le C\Vert a\Vert_{H^2(\OOO)}
\le CD
$$
by using the inequalities \eqref{(5.3)}.  Then 
\begin{align*}
& \Vert A_0u(t)\Vert \le CD 
+ \int^t_0 \Vert A_0^{\hhalf}K(t-s)\Vert \Vert A_0^{\hhalf}Q(s)u(s)\Vert ds\\
\le& CD + C\int^t_0 (t-s)^{\hhalf\alpha -1}\Vert A_0u(s)\Vert ds, \quad
0<s<T.
\end{align*}
The generalized Gronwall inequality yields the estimate
$$
\Vert A_0u(t)\Vert \le CD + C\int^t_0 (t-s)^{\hhalf\alpha -1}D ds
\le CD, \quad 0<t<T,
$$
which implies the inequality
$$
\Vert A_0u\Vert_{L^{\infty}(0,T;H^2(\OOO))} \le CD. 
$$
Next, for the space 
$C([0,T]; L^2(\OOO))$, we can repeat the same arguments  as the ones employed for 
the iterations $R^n$ of the operator $R$ in the proof 
of Theorem \ref{t2.1} and apply the fixed point theorem to the  
equation \eqref{(6.1)} that leads to the inclusion $A_0u \in C([0,T];L^2(\OOO))$.
The obtained results implicate
\begin{equation}
\label{(6.5)}
u\in C([0,T];H^2(\OOO)), \quad 
\Vert u\Vert_{C([0,T];H^2(\OOO))}
\le CD.                               
\end{equation}
Choosing $\ep_0 > 0$ sufficiently small, we have the equation
\begin{equation}
\label{(6.6)}
A_0^{\frac{3}{2}}u(t) = A_0^{\frac{3}{2}}G(t) 
+ \int^t_0 A_0^{\frac{3}{4}+\ep_0} K(t-s)A_0^{\frac{3}{4}-\ep_0}Q(s)u(s)ds,
\quad 0<t<T.                                                
\end{equation}

Next, according to \cite{Fu}, the inclusion
$$
\DDD(A_0^{\frac{3}{4}-\ep_0}) \subset H^{\frac{3}{2}-2\ep_0}(\OOO)
$$
holds true.
Now we proceed to the proof of the inclusion $Q(s)u(s) \in 
\DDD(A_0^{\frac{3}{4}-\ep_0})$.  
By \eqref{(5.3)}, we obtain the inequality
$$
\Vert A_0^{\frac{3}{2}}u(t)\Vert \le CD
+ \int^t_0 (t-s)^{(\frac{1}{4}-\ep_0)\alpha-1} 
\Vert A_0^{\frac{3}{4}-\ep_0}Q(s)u(s)\Vert ds,
$$
which leads to the estimate
\begin{equation}
\label{(6.7a)}
\Vert u(t)\Vert_{H^3(\OOO)} \le CD
+ \int^t_0 (t-s)^{(\frac{1}{4}-\ep_0)\alpha-1} 
\Vert u(s)\Vert_{H^3(\OOO)} ds, \quad 0<t<T
\end{equation}
because of the inequality
$$
\Vert A_0^{\frac{3}{4}-\ep_0}Q(s)u(s)\Vert 
\le C\Vert Q(s)u(s)\Vert_{H^{\frac{3}{2}}(\OOO)}
\le C\Vert Q(s)u(s)\Vert_{H^2(\OOO)}
\le C\Vert u(s)\Vert_{H^3(\OOO)},
$$
which follows from the regularity conditions \eqref{(1.2)} posed on the 
coefficients $b_j, c$. 

For $0<t<T$, the generalized Gronwall inequality applied to the integral inequality \eqref{(6.7a)} yields the estimate 
$$
\Vert u(t)\Vert_{H^3(\OOO)} \le C\left(1 
+ t^{\alpha\left(\frac{1}{4}-\ep_0\right)} 
\right)D.
$$

For the relation \eqref{(6.6)}, we repeat the same arguments  as the ones 
employed in the proof of 
Theorem \ref{t2.1} to estimate 
$A_0^{\frac{3}{2}}u(t)$ in the norm $C([0,T];L^2(\OOO))$ by the fixed point 
theorem arguments and thus we obtain the inclusion
 $A_0^{\frac{3}{2}}u \in C([0,T];L^2(\OOO))$.

Summarising the estimates derived above, we have shown that
\begin{equation}
\label{(6.7)}
\left\{ \begin{array}{rl}
& u \in C([0,T];\mathcal{D}(A_0^{\frac{3}{2}})) \subset C([0,T];H^3(\OOO)), \\
& \Vert u(t)\Vert_{H^3(\OOO)} \le C\left( 
1 + t^{\alpha\left(\frac{1}{4}-\ep_0\right)} \right)D,
\quad 0<t<T.                          
\end{array}\right.
\end{equation}

Next we estimate the norm $\Vert Au'(t)\Vert$.  First,  $u'(t)$ is represented in the form
\begin{align*}
& u'(t) = G'(t) + \frac{d}{dt}\int^t_0 K(t-s)Q(s)u(s) ds\\
= & G'(t) + \frac{d}{dt}\int^t_0 K(s)Q(t-s)u(t-s) ds \, = \, G'(t) + K(t)Q(0)u(0) \\
+ & \int^t_0 K(s) (Q(t-s)u'(t-s)
+ Q'(t-s)u(t-s)) ds, \quad 0<t<T,
\end{align*}
so that  
\begin{equation}
\label{(6.8)}
A_0u'(t) = A_0G'(t) + A_0K(t)Q(0)u(0) 
\end{equation}
$$
+ \int^t_0 A_0^{\hhalf}K(s) A_0^{\hhalf}(Q(t-s)u'(t-s)
+ Q'(t-s)u(t-s)) ds, \quad 0<t<T.       
$$
Similarly to the arguments applied for derivation of \eqref{(6.4)}, we obtain 
the inequality
$$
\Vert A_0^{\hhalf}(Q(t-s)u'(t-s) + Q'(t-s)u(t-s))\Vert 
\le C\Vert A_0u'(t-s)\Vert, \quad 0<t<T.
$$
The inclusion $Q(0)u(0) = Q(0)a \in C^2_0(\OOO) \subset \DDD(A_0)$ follows  
from the regularity conditions \eqref{(1.2)}
and the inclusion $a \in C^{\infty}_0(\OOO)$.  
Furthermore, by \eqref{(5.2)} and \eqref{(5.3)}, we obtain
$$
\Vert A_0S'(t)a\Vert 
= \Vert A_0^2K(t)a\Vert = \Vert K(t)A_0^2a\Vert 
\le Ct^{\alpha-1}\Vert A_0^2a\Vert \le Ct^{\alpha-1}\Vert a\Vert_{H^4(\OOO)}
$$
and 
$$
\Vert K(t)A_0(Q(0)a)\Vert \le Ct^{1-\alpha}\Vert A_0(Q(0)a)\Vert 
\le Ct^{\alpha-1}\Vert a\Vert_{H^3(\OOO)}.
$$
Hence, the representation \eqref{(6.8)} leads to the estimate
$$
\Vert A_0u'(t)\Vert \le Ct^{\alpha-1}D  
+ C\int^t_0 s^{\hhalf\alpha -1}\Vert A_0u'(t-s)\Vert ds,
\quad 0<t<T.
$$
Now we consider a vector space
$$
\www{X}:= \{v\in C([0,T];L^2(\OOO)) \cap C^1((0,T];L^2(\OOO));\, 
t^{1-\alpha}\ppp_tv \in C([0,T];L^2(\OOO))\}
$$
with the norm
$$
\Vert v\Vert_{\www{X}}:= \max_{0\le t\le T} \Vert t^{1-\alpha}\ppp_tv
(\cdot,t)\Vert_{L^2(\OOO)}
+ \max_{0\le t\le T} \Vert v(\cdot,t)\Vert_{L^2(\OOO)}.
$$
It is easy to verify that $\www{X}$ with the norm $\Vert v\Vert_{\www{X}}$ defined above is a Banach space.

Arguing similarly to the proof of Theorem \ref{t2.1} and applying the fixed 
point theorem in the Banach space $\www{X}$, we conclude that 
$A_0u \in \www{X}$, that is, $t^{1-\alpha}A_0u' \in C([0,T];L^2(\OOO))$. 
Using the inclusion  $\DDD(A_0) \subset 
C(\ooo{\OOO})$ in the spatial dimensions $d=1,2,3$,  the Sobolev embedding 
theorem yields 
\begin{equation}
\label{(6.9)}
u' \in C(\ooo{\OOO} \times (0,T]), \quad \Vert A_0u'(t)\Vert 
\le CDt^{\alpha-1}, \quad 0\le t\le T.
\end{equation}

Now we proceed to the estimation of $A_0^2u(t)$.
Since $\frac{d}{ds}(-A_0^{-1}S(s)) = K(s)$ for $0<s<T$ by \eqref{(5.2)}, 
the integration by parts yields 
\begin{align*}
& \int^t_0 K(t-s)Q(s)u(s) ds = \int^t_0 K(s)Q(t-s)u(t-s) ds \\
= & \left[ -A_0^{-1}S(s)Q(t-s)u(t-s)\right]^{s=t}_{s=0}\\
- &
\int^t_0 A_0^{-1}S(s)(Q'(t-s)u(t-s)+Q(t-s)u'(t-s)) ds\\
= & A_0^{-1}Q(t)u(t) - A_0^{-1}S(t)Q(0)u(0)
\end{align*}
\begin{equation}
\label{(6.10)}
- \int^t_0 A_0^{-1}S(s)(Q'(t-s)u(t-s)+Q(t-s)u'(t-s)) ds,
\quad 0<t<T.                        
\end{equation}
Applying the Lebesgue convergence theorem and the estimate
$\vert \MLONE(\eta)\vert \le \frac{C}{1+\eta},\ \eta>0$ 
(Theorem 1.6 in \cite{Po}), we readily reach  
$$
\Vert S(t)a - a\Vert^2
= \sumn \vert (a,\va_n)\vert^2 (\MLONE(-\la_nt^{\alpha}) - 1)^2
\, \longrightarrow\, 0
$$
as $t \to \infty$ for $a \in L^2(\OOO)$.  
\\
Hence,
$u \in C([0,T];L^2(\OOO))$ and $\lim_{t\downarrow 0}
\Vert (S(t)-1)a\Vert = 0$ and thus
$$
\lim_{s\downarrow 0} S(s)Q(t-s)u(t-s) = S(0)Q(t)u(t) \quad
\mbox{in } L^2(\OOO)
$$
and
$$
\lim_{s\uparrow t} S(s)Q(t-s)u(t-s) = S(t)Q(0)u(0) \quad
\mbox{in }L^2(\OOO),
$$
which justify the last equality in the formula \eqref{(6.10)}.

Thus, in terms of \eqref{(6.10)}, the representation \eqref{(5.7)} 
can be rewritten in the form
$$
A_0^2(u(t) - A_0^{-1}Q(t)u(t)) = A_0^2G(t) -A_0S(t)Q(0)u(0) 
$$
\begin{equation}
\label{(6.11)}
- \int^t_0 A_0^{\hhalf}S(s)A_0^{\hhalf}
(Q'(t-s)u(t-s) + Q(t-s)u'(t-s)) ds, \quad
0<t<T.                                       
\end{equation}
Since $u(0) = a \in C^{\infty}_0(\OOO)$ and $F \in C^{\infty}_0(\OOO
\times (0,T))$, in view of \eqref{(1.2)} we have the inclusions
$$
A_0^2G(\cdot) \in C([0,T];L^2(\OOO)), \ 
A_0S(t)Q(0)u(0) = S(t)(A_0Q(0)a) \in C([0,T];L^2(\OOO)).
$$
Now we use the conditions \eqref{(1.2)} and \eqref{(5.3)} and repeat the 
arguments employed for derivation 
of \eqref{(6.4)} by means of \eqref{(6.5)} and \eqref{(6.9)} to obtain the 
estimates
\begin{align*}
& \left\Vert \int^t_0 A_0^{\hhalf}S(s)A_0^{\hhalf} 
(Q'(t-s)u(t-s) + Q(t-s)u'(t-s)) ds \right\Vert\\
\le &C\int^t_0 s^{-\hhalf\alpha}\Vert Q'(t-s)u(t-s) + Q(t-s)u'(t-s)
\Vert_{H^1(\OOO)} ds\\
\le& C\int^t_0 s^{-\hhalf\alpha}(\Vert A_0u'(t-s)\Vert
+ \Vert A_0u(t-s)\Vert) ds
\le Ct^{\hhalf\alpha}D
\end{align*}
and the inclusion
$$
-\int^t_0 A_0^{\hhalf}S(s)A_0^{\hhalf}(Q'(t-s)u(t-s) + Q(t-s)u'(t-s)) ds 
\in C([0,T];L^2(\OOO)).
$$
Therefore, 
$$
A_0^2(u(t) - A_0^{-1}Q(t)u(t)) = A_0(A_0u(t) - Q(t)u(t))
\in C([0,T];L^2(\OOO)),
$$
that is,
$$
A_0u(t) - Q(t)u(t) \in C([0,T]; \DDD(A_0)) \subset C([0,T];H^2(\OOO)).
$$
On the other hand, the estimate \eqref{(6.7)} implies  $Q(t)u(t) \in C([0,T];H^2(\OOO))$
and we obtain 
\begin{equation}
\label{(6.12a)}
A_0u(t) \in C([0,T];H^2(\OOO)).   
\end{equation}

For further arguments, we define the Schauder spaces $C^{\theta}(\ooo{\OOO})$ 
and 
$C^{2+\theta}(\ooo{\OOO})$ with $0<\theta<1$ (see, e.g., \cite{GT},
\cite{LU}) as follows:  A function $w$ is said to belong to the space 
$C^{\theta}(\ooo{\OOO})$ if 
$$
\sup_{x, x'\in \OOO, \, x \ne x'} 
\frac{\vert w(x) - w(x')\vert}{\vert x-x'\vert^{\theta}}
< \infty.
$$  
For  $w \in C^{\theta}(\ooo{\OOO})$, we define the norm
$$
\Vert w\Vert_{C^{\theta}(\ooo{\OOO})}
:= \Vert w\Vert_{C(\ooo{\OOO})}
+ \sup_{x, x'\in \OOO, \, x \ne x'}  
\frac{\vert w(x) - w(x')\vert}{\vert x-x'\vert^{\theta}}
$$
and for $w\in C^{2+\theta}(\ooo{\OOO})$, the norm is given by
$$
\Vert w\Vert_{C^{2+\theta}(\ooo{\OOO})}
:= \Vert w\Vert_{C^2(\ooo{\OOO})}
+ \sum_{\vert \tau\vert=2}
\sup_{x, x'\in \OOO, \, x \ne x'} 
\frac{\vert \ppp_x^{\tau}w(x) - \ppp_x^{\tau}w(x')\vert}
{\vert x-x'\vert^{\theta}}.
$$

In the last formula, the notations 
 $\tau := (\tau_1, ..., \tau_d) \in (\N \cup \{0\})^d$,
$\ppp_x^{\tau}:= \ppp_1^{\tau_1}\cdots \ppp_d^{\tau_d}$, and
$\vert \tau\vert:= \tau_1 + \cdots + \tau_d$ are employed.

For $d=1,2,3$, the Sobolev embedding theorem says that $H^2(\OOO) \subset 
C^{\theta}(\ooo{\OOO})$ with some $\theta \in (0,1)$
(\cite{Ad}).  

Therefore, in view of \eqref{(6.12a)}, we obtain the inclusion 
$h:= A_0u(\cdot,t) 
\in C^{\theta}(\ooo{\OOO})$ for each $t \in [0,T]$.
Now we apply the Schauder 
estimate (see, e.g., \cite{GT} or \cite{LU})
for solutions to the elliptic boundary value problem
$$
A_0u(\cdot,t) = h\in C^{\theta}(\ooo{\OOO}) \quad \mbox{in } \OOO
$$
with the boundary condition $\NUNU u(\cdot,t) + \sigma(\cdot)u(\cdot,t) = 0$ 
on $\ppp\OOO$ to reach the inclusion
$$
u \in C([0,T]; C^{2+\theta}(\ooo{\OOO})).     
$$
This inclusion and \eqref{(6.9)} yield the conclusion 
$u \in C([0,T];C^2(\ooo{\OOO}))$ and 
$t^{1-\alpha}\ppp_tu \in C([0,T];C(\ooo{\OOO}))$ of the lemma.

Finally we prove that $\lim_{t\to 0} \Vert u(t) - a \Vert = 0$.
By \eqref{(5.3)}, we have
\begin{align*}
& \left\Vert \int^t_0 K(t-s)h(s) ds\right\Vert
\le \int^t_0 \Vert K(t-s)h(s) \Vert ds 
\le C\int^t_0 (t-s)^{\alpha-1} \Vert h(s)\Vert ds\\
\le & \frac{Ct^{\alpha}}{\alpha}\Vert h\Vert_{L^{\infty}(0,T;L^2(\OOO))},
\end{align*}
and so
\begin{equation}\label{(6.13)}
\lim_{t\to 0} \int^t_0 K(t-s)h(s) ds = 0 \quad \mbox{in $L^2(\OOO)$}
\end{equation}
for each $h \in L^{\infty}(0,T;L^2(\OOO))$.
Therefore by the regularity $u \in C([0,T];C^2(\ooo{\OOO}))$, we see that 
$$
\lim_{t\to 0} \left( \int^t_0 K(t-s)F(s) ds + Ru(t) \right)
= 0 \quad \mbox{in $L^2(\OOO)$},
$$
where $R$ is defined in \eqref{(5.8)}.
Moreover, for justifying \eqref{(6.10)}, we have already proved
$\lim_{t\to 0} \Vert S(t)a - a\Vert = 0$ for $a \in L^2(\OOO)$.
Thus the proof of Lemma \ref{l6.1} is complete.
\end{proof}

\vspace{0.3cm}

\noindent
II. Second part of the proof.

\vspace{0.3cm}

In this part, we weaken the regularity conditions posed on the solution $u$ 
to \eqref{(4.2)} in 
Lemma \ref{l4.2} and prove the same
results provided that $u\in L^2(0,T;H^2(\OOO))$ and 
$u-a \in \HH(0,T;L^2(\OOO))$, under the assumption that 
$\min\limits_{(x,t)\in \ooo{\OOO}\times [0,T]} b_0(x,t) > 0$ 
is sufficiently large.

Let $F \in L^2(0,T;L^2(\OOO))$ and $a\in H^1(\OOO)$ satisfy the inequalities 
 $F(x,t)\ge 0,\ (x,t)\in \OOO\times (0,T)$ and $a(x)\ge 0,\ x\in \OOO$.

Now we apply the standard mollification procedure (see, e.g., \cite{Ad}) 
and construct the sequences
$F_n \in C^{\infty}_0(\OOO\times (0,T))$ and $a_n \in C^{\infty}_0(\OOO)$,
$n\in \N$ such that $F_n(x,t)\ge 0,\ (x,t)\in \OOO\times (0,T)$ and $a_n(x)\ge 0,\ x\in \OOO$, $n\in \N$ and $\lim_{n\to\infty}
\Vert F_n-F\Vert_{L^2(0,T;L^2(\OOO))} = 0$ and 
$\lim_{n\to\infty}\Vert a_n-a\Vert_{H^1(\OOO)} = 0$.
Then Lemma \ref{l6.1} yields the inclusion
$$
u(F_n,a_n) \in C([0,T];C^2(\ooo{\OOO})), \quad
t^{1-\alpha}\ppp_tu(F_n,a_n) \in C([0,T];C(\ooo{\OOO})),
\quad n\in \N
$$
and thus Lemma \ref{l4.2} ensures the inequalities
\begin{equation}
\label{(6.14)}
u(F_n,a_n)(x,t) \ge 0 ,\ \ (x,t)\in  \OOO\times (0,T), \, n\in \N.
\end{equation}
Since Theorem \ref{t2.1} holds true  for the 
initial-boundary value problem \eqref{(4.2)} with $F$ and $a$ 
replaced by $F-F_n$ and $a-a_n$, respectively, we have 
$$
\Vert u(F,a) - u(F_n,a_n)\Vert_{L^2(0,T; H^2(\OOO))}
$$
$$ 
\le C(\Vert a-a_n\Vert_{H^1(\OOO)} 
+ \Vert F-F_n\Vert_{L^2(0,T;L^2(\OOO))}) \, \to \, 0
$$
as $n\to \infty$.  Therefore, we can choose a subsequence $m(n)\in \N$
such that $u(F,a)(x,t) = \lim_{m(n)\to \infty} u(F_{m(n)},a_{m(n)})(x,t)$
for almost all $(x,t) \in \OOO\times (0,T)$.
Then the inequality \eqref{(6.14)} leads to the desired result, namely, to the inequality   
$u(F,a)(x,t) \ge 0$ for almost all 
$(x,t) \in \OOO\times (0,T)$.

\vspace{0.3cm}

\noindent
III. Third part of the proof.

\vspace{0.3cm}

Let the inequalities $a(x)\ge 0,\ x\in \OOO$ and  $F(x,t)\ge 0,\ (x,t)\in \OOO\times (0,T)$ hold true for $a\in H^1(\OOO)$ and $F \in L^2(0,T;L^2(\OOO))$  and let $u=u(F,a) \in 
L^2(0,T;H^2(\OOO))$ is a solution to the problem \eqref{(2.3)}.
In order to complete the proof of Theorem 2, we have to demonstrate  
the non-negativity of the solution without any assumptions
on the sign of the zeroth-order coefficient.

First, the zeroth-order coefficient $b_0(x,t)$ in the definition \eqref{(3.1a)} of the operator $-A_1$ is set to a constant $b_0>0$ that is
assumed to be sufficiently large.
In this case, the initial-boundary value problem \eqref{(2.3)}
can be rewritten as follows:
\begin{equation}
\label{(6.15)}
\left\{ \begin{array}{rl}
& \pppa (u-a) + A_1u = (b_0+c(x,t))u + F(x,t), \quad 
(x,t) \in \OOO\times (0,T), \\
& \NUNU u + \sigma u = 0 \quad \mbox{on $\ppp\OOO\times (0,T)$}.
\end{array}\right.
\end{equation}
In what follows, we choose sufficiently large $b_0>0$ such that 
$b_0 \ge \Vert c\Vert_{C(\ooo{\OOO} \times [0,T])}$.

In the previous parts of the proof, we already interpreted the solution $u$ as 
a unique fixed point for the equation \eqref{(6.1)}. 
Now let us construct an appropriate approximating sequence 
$u_n$, $n\in \N$ for the fixed point $u$. First we set $u_0(x,t) := 0$ for $(x,t) \in \OOO\times (0,T)$ and $u_1(x,t) = a(x) \ge 0, \ (x,t) \in \OOO\times (0,T)$. Then 
we define a sequence $u_{n+1},\ n\in \N$ 
of solutions to the following initial-boundary value problems with the given $u_n$:
\begin{equation}
\label{(6.16)}
\left\{ \begin{array}{rl}
&\pppa (u_{n+1}-a) + A_1u_{n+1} = (b_0+c(x,t))u_n + F(x,t)
\quad \mbox{in } \OOO\times (0,T),\\
& \NUNU u_{n+1} + \sigma u_{n+1} = 0 \quad \mbox{on } \ppp\OOO\times (0,T),\\
& u_{n+1} - a \in \HH(0,T;L^2(\OOO)), \quad n\in \N.
\end{array}\right.
\end{equation}

First we show that
\begin{equation}
\label{(6.17)}
u_n(x,t) \ge 0, \quad (x,t) \in \OOO\times (0,T), \quad n\in \N.
\end{equation}
Indeed, the inequality \eqref{(6.17)} holds for $n=1$.  Now we assume that 
$u_n(x,t) \ge 0,\ (x,t)\in \OOO\times (0,T)$.
Then $(b_0+c(x,t))u_n(x,t) + F(x,t) \ge 0,\ (x,t)\in \QQQQ$, and thus by the results 
established in the second part of the proof of Theorem \ref{t2.2}, 
we obtain the inequality $u_{n+1}(x,t) \ge 0,\ (x,t)\in \QQQQ$.  
By the principle of mathematical induction, 
the inequality \eqref{(6.17)} holds true for all $n\in \N$.

Now we rewrite the problem \eqref{(6.16)} as
$$
\pppa (u_{n+1}(t) - a) + A_0u_{n+1}(t) 
= (Q(t)u_{n+1}-(c(t)+b_0)u_{n+1})
+ (b_0+c(t))u_n + F,
$$
where $A_0$ and $Q(t)$ are defined by \eqref{(3.2)} and \eqref{(5.8)}, respectively. 
Next we estimate $w_{n+1}:= u_{n+1} - u_n$.  By the relation \eqref{(6.16)}, 
$w_{n+1}$ is a solution to the problem
$$
\left\{ \begin{array}{rl}
&\pppa w_{n+1} + A_0w_{n+1} = (Q(t)w_{n+1} - (c(t)+b_0)w_{n+1})
+ (b_0+c(x,t))w_n \\ 
& \qquad \qquad \quad \mbox{in } \OOO\times (0,T),\\
& \NUNU w_{n+1} + \sigma w_{n+1} = 0 \quad \mbox{on } \ppp\OOO\times (0,T),\\
& w_{n+1} \in \HH(0,T;L^2(\OOO)), \quad n\in \N.
\end{array}\right.
$$
In terms of the operator $K(t)$ defined by \eqref{(5.2)}, acting  
similarly to our analysis of the fixed point equation \eqref{(6.1)}, 
we obtain the integral equation 
\begin{align*}
& w_{n+1}(t) = \int^t_0 K(t-s)(Qw_{n+1})(s) ds
- \int^t_0 K(t-s)(c(s)+b_0)w_{n+1}(s) ds\\
+ & \int^t_0 K(t-s)(b_0+c(s))w_n(s) ds, \quad 0<t<T, 
\end{align*}
which leads to the inequalities  
\begin{align*}
& \Vert A_0^{\hhalf}w_{n+1}(t)\Vert 
\le \int^t_0 \Vert A_0^{\hhalf}K(t-s)\Vert 
\Vert Q(s)w_{n+1}(s)\Vert ds\\
+& \int^t_0 \Vert A_0^{\frac{1}{2}}K(t-s)\Vert
\Vert (c(s)+b_0)w_{n+1}(s)\Vert ds
+ \int^t_0 \Vert A_0^{\hhalf}K(t-s)\Vert \Vert (b_0+c(s))w_n(s)\Vert ds\\
\le& C\int^t_0 (t-s)^{\hhalf\alpha-1}
\Vert A_0^{\hhalf}w_{n+1}(s)\Vert ds 
+ C\int^t_0 (t-s)^{\hhalf\alpha-1}\Vert A_0^{\hhalf}w_n(s)\Vert ds
\quad \mbox{for $0<t<T$.}
\end{align*}
For their derivation, we used the norm estimates
$$
\Vert Q(s)w_{n+1}(s)\Vert 
\le C\Vert w_{n+1}(s)\Vert_{H^1(\OOO)} 
\le C\Vert A_0^{\hhalf}w_{n+1}(s)\Vert
$$
and
$$
\Vert (c(s)+b_0)w_\ell(s)\Vert \le C\Vert w_\ell(s)\Vert_{H^1(\OOO)}
\le C\Vert A_0^{\hhalf}w_\ell(s)\Vert, \quad \ell=n, n+1
$$
that hold true under the conditions \eqref{(1.2)}.
Thus  we arrive at the integral inequality 
\begin{align*}
& \Vert A_0^{\hhalf}w_{n+1}(t)\Vert 
\le C\int^t_0 (t-s)^{\frac{1}{2}\alpha-1} 
\Vert A_0^{\hhalf}w_{n+1}(s)\Vert ds\\
+ & C\int^t_0 (t-s)^{\frac{1}{2}\alpha-1} 
\Vert A_0^{\hhalf}w_n(s)\Vert ds, \quad 0<t<T.
\end{align*}
The generalized Gronwall inequality yields now the estimate 
\begin{align*}
&\Vert \Ahalf w_{n+1}(t)\Vert
\le C\int^t_0 (t-s)^{\hhalf\alpha-1}\Vert \Ahalf w_n(s)\Vert ds\\
+& C\int^t_0 (t-s)^{\hhalf\alpha-1}
\left( \int^s_0 (s-\xi)^{\hhalf\alpha-1}
\Vert \Ahalf w_n(\xi)\Vert d\xi\right)ds.
\end{align*}
The  second term at the right-hand side of the last inequality can be represented as follows: 
\begin{align*}
& \int^t_0 (t-s)^{\hhalf\alpha-1}
\left( \int^s_0 (s-\xi)^{\hhalf\alpha-1}\Vert \Ahalf w_n(\xi)\Vert d\xi\right)
ds\\
=& \int^t_0 \Vert \Ahalf w_n(\xi)\Vert \left( \int^t_{\xi}
(t-s)^{\hhalf\alpha-1} (s-\xi)^{\hhalf\alpha-1} ds \right) d\xi\\
=& \frac{\Gamma\left( \hhalf\alpha\right)\Gamma\left( \hhalf\alpha\right)}
{\Gamma(\alpha)}\int^t_0 (t-\xi)^{\alpha-1}\Vert \Ahalf w_n(\xi)\Vert d\xi\\
=& \frac{\Gamma\left( \hhalf\alpha\right)^2}{\Gamma(\alpha)}T^{\hhalf\alpha}
\int^t_0 (t-s)^{\hhalf\alpha-1}\Vert \Ahalf w_n(s)\Vert ds.
\end{align*}
Thus, we can choose a constant $C>0$ depending on $\alpha$ and $T$, 
such that 
\begin{equation}
\label{(ineq1)}
\Vert \Ahalf w_{n+1}(t)\Vert 
\le C\int^t_0 (t-\xi)^{\hhalf\alpha-1}\Vert \Ahalf w_n(s)\Vert ds,
\quad 0<t<T,\, n\in \N.
\end{equation}
Recalling that 
$$
\int^t_0 (t-s)^{\hhalf\alpha -1}\eta(s)ds = \Gamma\left( \hhalf\alpha\right)
(J^{\hhalf\alpha}\eta)(t), \quad t>0,
$$
and setting $\eta_n(t):= \Vert \Ahalf w_n(t)\Vert$, we can rewrite 
\eqref{(ineq1)} in the form
\begin{equation}
\label{(6.18)}
\eta_{n+1}(t) \le C\Gamma\left( \hhalf\alpha\right)(J^{\hhalf\alpha}\eta_n)(t),
\quad 0<t<T, \, n\in \N.                        
\end{equation}

Since the Riemann-Liouville integral $J^{\hhalf\alpha}$ preserves the sign  and the semi-group property 
$J^{\beta_1}(J^{\beta_2}\eta)(t) = J^{\beta_1+\beta_2}\eta(t)$ is valid for 
any $\beta_1, \beta_2 > 0$, applying the inequality \eqref{(6.18)} 
repeatedly, we obtain the estimates
\begin{align*}
& \eta_n(t) \le \left(C\Gamma\left( \hhalf\alpha\right)\right)^{n-1}
(J^{(n-1)\frac{\alpha}{2}}\eta_1)(t) \\
= & \frac{\left(C\Gamma\left( \hhalf\alpha\right)\right)^{n-1}}
{\Gamma\left( \frac{\alpha}{2}(n-1)\right)}
\left( \int^t_0 (t-s)^{(n-1)\hhalf\alpha-1} ds\right) \Vert \Ahalf a\Vert\\
= & \frac{\left(C\Gamma\left( \hhalf\alpha\right)\right)^{n-1}}
{\Gamma\left( \frac{\alpha}{2}(n-1)\right)}
\frac{t^{(n-1)\hhalf\alpha}}{(n-1)\hhalf\alpha} \Vert \Ahalf a\Vert
\le C_1\frac{\left(C\Gamma\left( \hhalf\alpha\right) T^{\frac{\alpha}{2}}
\right)^{n-1}}
{\Gamma\left( \frac{\alpha}{2}(n-1)\right)}.
\end{align*}
The known asymptotic behavior of the gamma function justifies the relation
$$
\lim_{n\to \infty} 
\frac{\left(C\Gamma\left( \hhalf\alpha\right) T^{\frac{\alpha}{2}}
\right)^{n-1}}
{\Gamma\left( \frac{\alpha}{2}(n-1)\right)} = 0.
$$
Thus we have proved that the sequence $u_N = w_0 + \cdots + w_N$ converges to 
the solution 
$u$ in $L^{\infty}(0,T;H^1(\OOO))$ as $N \to \infty$.
Therefore, we can choose a subsequence $m(n)\in \N$
such that $\lim_{m(n)\to\infty} u_{m(n)}(x,t) = u(x,t)$ for 
almost all $(x,t) \in \QQQQ$.
This statement in combination with the inequality \eqref{(6.17)} means that 
$u(x,t) \ge 0$ for almost all
$(x,t) \in \QQQQ$.  The proof of Theorem \ref{t2.2} is completed.
\end{proof}

Now let us fix a source function $F = F(x,t) \ge 0,\ (x,t)\in \QQQQ$ and an initial 
value $a \in H^1(\OOO)$ in the initial-boundary value problem \eqref{(2.3)}  
and denote 
by $u(c,\sigma) = u(c,\sigma)(x,t)$  the solution 
to the  problem \eqref{(2.3)} with
the functions $c=c(x,t)$ and $\sigma = \sigma(x)$.
Then the following comparison property regarding the coefficients 
$c$ and $\sigma$ is valid:

\begin{theorem}
\label{t2.3}
Let $a\in H^1(\OOO)$ and $F \in L^2(\OOO\times (0,T))$ and the inequalities  $a(x)\ge 0,\ x\in \OOO$ and
$F(x,t)\ge 0,\ (x,t)\in \OOO \times (0,T)$ hold true.
\\
(i) Let $c_1, c_2 \in C^1([0,T]; C^1(\ooo{\OOO})) \cap
C([0,T];C^2(\ooo{\OOO}))$ and
$c_1(x,t) \ge c_2(x,t)$ for $(x,t)\in \OOO$.
Then $u(c_1,\sigma)(x,t) \ge u(c_2,\sigma)(x,t)$ in $\OOO \times (0,T)$.
\\
(ii) Let $c(x,t) < 0,\ (x,t) \in \QQQQ$ and a constant $\sigma_0>0$ be arbitrary and fixed.
If the smooth functions $\sigma_1, \sigma_2$ on $\ppp\OOO$ satisfy  the conditions 
$$
\sigma_2(x) \ge \sigma_1(x) \ge \sigma_0,\ x\in \ppp\OOO,
$$
then the inequality $u(c,\sigma_1) \ge u(c, \sigma_2),\ x\in \QQQQ$ holds true.
\end{theorem}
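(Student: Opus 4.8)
\emph{Strategy and Part (i).} In both parts the plan is to form the difference $w$ of the two solutions, identify the initial-boundary value problem it satisfies, and apply Theorem \ref{t2.2} (or a variant of its proof). For part (i) write $u_k:=u(c_k,\sigma)$ and let $A^{(k)}$ be the operator \eqref{(2.1)} with zeroth-order coefficient $c_k$; since $A^{(1)}v-A^{(2)}v=-(c_1-c_2)v$, subtracting the equations $\pppa(u_k-a)+A^{(k)}u_k=F$ shows that $w:=u_1-u_2$ belongs to $L^2(0,T;H^2(\OOO))$ with $w(x,\cdot)\in\HH(0,T)$ for a.e.\ $x$, and satisfies
$$
\pppa w+A^{(1)}w=(c_1-c_2)\,u_2\ \ \text{in }\OOO\times(0,T),\qquad \NUNU w+\sigma w=0\ \ \text{on }\ppp\OOO\times(0,T).
$$
By Theorem \ref{t2.2}, $u_2\ge0$ (as $a,F\ge0$), so $(c_1-c_2)u_2$ is a non-negative element of $L^2(0,T;L^2(\OOO))$; since $c_1$ satisfies \eqref{(1.2)}, $w$ is the $\mathcal{Y}_\alpha$-solution $u((c_1-c_2)u_2,0)$ of a problem of the form \eqref{(2.3)}, and a second application of Theorem \ref{t2.2} gives $w\ge0$, i.e.\ $u(c_1,\sigma)\ge u(c_2,\sigma)$.

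\emph{Part (ii): reduction.} Put $u_j:=u(c,\sigma_j)$, $w:=u_1-u_2$. The interior equations share the operator $A$, so $\pppa w+Aw=0$ in $\OOO\times(0,T)$, while subtracting $\NUNU u_1+\sigma_1u_1=0$ and $\NUNU u_2+\sigma_2u_2=0$ gives
$$
\NUNU w+\sigma_1 w=(\sigma_2-\sigma_1)\,u_2\ \ \text{on }\ppp\OOO\times(0,T),
$$
and $w(x,\cdot)\in\HH(0,T)$. Since $\sigma_2\ge\sigma_0>0$ and $a,F\ge0$, Theorem \ref{t2.2} gives $u_2\ge0$, so the datum $g:=(\sigma_2-\sigma_1)u_2$ is non-negative. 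Thus it suffices to prove: if $w\in L^2(0,T;H^2(\OOO))$ solves $\pppa w+Aw=0$ in $\OOO\times(0,T)$, $\NUNU w+\sigma_1 w=g\ge0$ on $\ppp\OOO\times(0,T)$ and $w(x,\cdot)\in\HH(0,T)$, then $w\ge0$.

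\emph{Part (ii): proof of the positivity statement.} I would mimic the three-step scheme of Sections \ref{sec3}--\ref{sec4}, carrying every step out on differences of solutions of \emph{homogeneous}-Robin problems so as to avoid rebuilding the solution operators for a non-homogeneous boundary condition. (1) For smooth data and assuming the zeroth-order coefficient large (so $A$ is of the form $A_1$), rerun the barrier argument of Lemma \ref{l4.2} on $w_\ep:=w+\ep(M+\psi+t^{\alpha})$ with $\psi$ the function of \eqref{(4.3)} with $\sigma$ replaced by $\sigma_1$; the only change is that at a boundary minimum point $\NUNU w_\ep+\sigma_1 w_\ep=g+\ep(\NUNU\psi+\sigma_1\psi)+\ep\sigma_1(M+t^{\alpha})\ge\ep>0$, which is at least as strong as \eqref{(4.5)}, so the contradiction follows exactly as there, and $\ep\downarrow0$ gives $w\ge0$. (2) Mollify $F,a$ as in Part II of the proof of Theorem \ref{t2.2} (the homogeneous-Robin solutions entering the differences being smooth by Lemma \ref{l6.1}) and pass to the limit via Theorem \ref{t2.1}, obtaining the same conclusion for $L^2$ source and non-negative boundary datum when the zeroth-order coefficient is large. (3) Remove the largeness assumption by the iteration of Part III: with a constant $b_0\ge\Vert c\Vert_{C(\ooo{\OOO}\times[0,T])}$ one has $b_0+c\ge0$ (as $c<0$), and setting $u_0^{(j)}:=0$, $u_1^{(j)}:=a$, $\pppa(u_{m+1}^{(j)}-a)+A_1u_{m+1}^{(j)}=(b_0+c)u_m^{(j)}+F$ with the $\sigma_j$-Robin condition, the differences $w_m:=u_m^{(1)}-u_m^{(2)}$ solve $\pppa w_m+A_1w_m=(b_0+c)w_{m-1}$ with the $\sigma_1$-Robin datum $(\sigma_2-\sigma_1)u_m^{(2)}\ge0$; induction via step (2) gives $w_m\ge0$, and $w_m\to w$ in $L^\infty(0,T;H^1(\OOO))$ by the Gronwall/gamma-function estimates of Part III, so $w\ge0$ a.e., i.e.\ $u(c,\sigma_1)\ge u(c,\sigma_2)$.

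\emph{Main obstacle.} Part (i) is immediate given Theorem \ref{t2.2}; the real content is part (ii), namely upgrading the positivity result to a non-homogeneous (but sign-definite) Robin datum. The two delicate points are (a) that the barrier argument of Lemma \ref{l4.2} is genuinely unaffected --- it is, since adding $g\ge0$ only strengthens \eqref{(4.5)} and leaves the interior extremum argument intact --- and (b) threading the datum $g$, which is only as regular as $u_2$, through the mollification and the $b_0$-removal iteration while keeping the approximating boundary data non-negative and the a priori estimates of Theorems \ref{t2.1}--\ref{t2.2} applicable; phrasing every step through differences of homogeneous-Robin solutions, as above, is what keeps this bookkeeping under control.
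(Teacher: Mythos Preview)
Your Part (i) is identical to the paper's argument.

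For Part (ii) your reduction to the non-homogeneous Robin problem for $w=u(c,\sigma_1)-u(c,\sigma_2)$ matches the paper, but the paper then takes a much shorter route than your three-step scheme. The key observation you are missing is that the standing hypotheses $c<0$ on $\ooo{\OOO}\times[0,T]$ and $\sigma_1\ge\sigma_0>0$ on $\ppp\OOO$ make the full strength of Lemma~\ref{l4.2} unnecessary: a \emph{simpler} barrier $\www{w}=u+\ep(1+t^{\alpha})$ (no auxiliary $\psi$, no large zeroth-order coefficient) already works, because $-c(x,t)\,\ep(1+t^{\alpha})>0$ controls the interior equation and $\sigma_1\,\ep(1+t^{\alpha})\ge\sigma_0\ep>0$ controls the boundary. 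The paper packages this as a separate Lemma~\ref{l4.2a} (stated with the inequality $\NUNU u+\sigma u\ge 0$ instead of equality, so it applies directly to the non-homogeneous Robin datum), mollifies $F,a$ once, applies Lemma~\ref{l6.1} to get smooth $u_n,v_n$, invokes Lemma~\ref{l4.2a} on $w_n=u_n-v_n$, and passes to the limit. No iteration is needed at all.

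Your scheme is not wrong in principle, but it is considerably heavier, and your step~(3) is under-specified: the difference $w_m=u_m^{(1)}-u_m^{(2)}$ is a difference of homogeneous-Robin solutions with \emph{different} sources $(b_0+c)u_{m-1}^{(1)}+F$ and $(b_0+c)u_{m-1}^{(2)}+F$, so ``induction via step~(2)'' does not literally apply as you stated it (step~(2) was phrased for differences with the same $F,a$). To make it go through you would have to mollify both sources separately, use that mollification preserves the ordering $F_1\ge F_2$, and check that Lemma~\ref{l6.1} still yields enough regularity when the source is merely $C([0,T];C^2(\ooo{\OOO}))$ rather than $C_0^\infty$. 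All of this is avoided by the paper's simpler barrier.
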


\begin{proof}
We start with a proof of the statement (i). Because $a(x)\ge 0,\ x\in\OOO$ and $F(x,t)\ge 0,\ (x,t)\in \QQQQ$, Theorem \ref{t2.2}
yields the inequality
$u(c_2,\sigma)(x,t)\ge 0,\ (x,t)\in \QQQQ$.
Setting $u(x,t):= u(c_1,\sigma)(x,t) - u(c_2,\sigma)(x,t)$ for $(x,t) 
\in \QQQQ$, we obtain
$$
\left\{ \begin{array}{rl}
& \pppa u - \sumij \ppp_i(a_{ij}\ppp_ju) 
- \sum_{j=1}^d b_j \ppp_ju\\
-& c_1(x,t)u = (c_1-c_2)u(c_2,\sigma)(x,t) \quad \mbox{in }\QQQQ, \\
& \NUNU u + \sigma u = 0 \quad \mbox{on } \ppp\OOO,\\
& u \in \HH(0,T;L^2(\OOO)).
\end{array}\right.
$$
Since $u(c_2,\sigma)(x,t) \ge 0$ and $(c_1-c_2)(x,t) \ge 0$ for $(x,t) 
\in \QQQQ$,
 Theorem \ref{t2.2} leads to the estimate $u(x,t)\ge 0$ for $(x,t) \in \QQQQ$, 
which is equivalent to the inequality $u(c_1,\sigma)(x,t) 
\ge u(c_2,\sigma)(x,t)$ for $(x,t) \in \QQQQ$ and the statement (i) is proved.
\vspace{0.2cm}

Now we proceed to the proof of the statement (ii). Similarly to the procedure applied for the second part of the proof of 
Theorem \ref{t2.2}, we choose the 
sequences $F_n \ge 0$, $F_n \in C^{\infty}_0(\QQQQ)$ and 
$a_n \ge 0$, $a_n \in C^{\infty}_0(\OOO)$, $n\in \N$ such that 
$F_n \to F$ in $L^2(\QQQQ)$ and
$a_n \to a$ in $H^1(\OOO)$.  Let $u_n$, $v_n$ be the 
solutions to the initial-boundary value problem \eqref{(2.3)} with 
$F=F_n$, $a=a_n$ and with the coefficients 
$\sigma_1$ and $\sigma_2$ in the boundary condition,
respectively.  According to Lemma \ref{l6.1}, the inclusions $v_n, u_n \in C(\ooo{\OOO} \times [0,T])$
and $t^{1-\alpha}\ppp_tv_n, \, t^{1-\alpha}\ppp_tu_n 
\in C([0,T];C(\ooo{\OOO}))$, $n\in \N$ hold true 
and thus Theorem \ref{t2.2} yields  
\begin{equation}\label{(4.22a)}
v_n(x,t) \ge 0, \quad (x,t) \in \ppp\OOO\times (0,T).
\end{equation} 

Moreover,  the relation
\begin{equation}
\label{(6.19)}
\lim_{n\to\infty}\Vert u_n - u(c,\sigma_1)\Vert_{L^2(0,T;L^2(\OOO))}
= \lim_{n\to\infty}\Vert v_n - u(c,\sigma_2)\Vert_{L^2(0,T;L^2(\OOO))}
= 0                                        
\end{equation}
follows from Theorem \ref{t2.1}.
Let us now define an auxiliary function $w_n:= u_n - v_n$.
For this function, the inclusions 
\begin{equation}\label{(4.23)}
t^{1-\alpha}\ppp_tw_n \in C([0,T];C(\ooo{\OOO})), \quad 
w_n \in C([0,T];C^2(\ooo{\OOO})), \quad n\in \N
\end{equation}
hold true. Furthermore, it is a solution to the initial-boundary value problem 
\begin{equation}\label{(4.24)}
\left\{ \begin{array}{rl}
& \pppa w_n + Aw_n = 0 \quad \mbox{in } \QQQQ,\\
& \NUNU w_n + \sigma_1w_n = (\sigma_2-\sigma_1)v_n
  \quad \mbox{on } \ppp\OOO\times (0,T),\\
& w_n(x,\cdot) \in \HH(0,T) \quad \mbox{for almost all } x\in \OOO.
\end{array}\right.
\end{equation}
The inequalities \eqref{(4.22a)} and $\sigma_2(x) \ge \sigma_1(x), \ x\in \ppp\OOO$ lead to the  
estimate
\begin{equation}\label{(4.23a)}
\NUNU w_n + \sigma_1w_n \ge 0 \quad \mbox{on $\ppp\OOO\times (0,T)$}.
\end{equation}

To finalize the proof of the theorem,  a variant of Lemma \ref{l4.2} formulated below will be employed.

\begin{lemma}
\label{l4.2a}
Let the elliptic operator $-A$ be defined by \eqref{(2.1)} and the conditions 
 \eqref{(1.2)} be satisfied. Moreover, let the inequality $c(x,t) < 0$ for $x \in \ooo{\OOO}$ and
$0\le t \le T$ hold true and there exist a constant $\sigma_0>0$ such that 
$$
\sigma(x) \ge \sigma_0 \quad \mbox{for all $x\in \ppp\OOO$}.
$$
For $a \in H^1(\OOO)$ and $F\in L^2(\OOO\times (0,T))$, we further assume
that there exists a solution $u\in C([0,T];C^2(\ooo{\OOO}))$ 
 to the initial-boundary value problem 
$$
\left\{ \begin{array}{rl}
& \pppa (u-a) + Au = F \quad \mbox{in $\OOO\times (0,T)$}, \\
& \ppp_{\nu_A}u + \sigma(x)u \ge 0 \quad \mbox{on 
$\ppp\OOO \times (0,T)$}, \\
& u(x,\cdot) - a\in H_{\alpha}(0,T) \quad \mbox{for almost all
$x\in \OOO$}
\end{array}\right.
$$
that satisfies the inclusion $t^{1-\alpha}\ppp_tu \in C([0,T];C(\ooo{\OOO}))$.
Then the inequalities  $F(x,t) \ge 0,\ (x,t)\in \OOO \times (0,T)$ and $a(x)\ge 0, \  \OOO$ implicate the inequality 
$u(x,t) \ge 0,\ (x,t)\in \OOO\times (0,T)$.
\end{lemma}

In the formulation of this lemma,  at the expense of the extra condition $\sigma(x) > 0$
on $\ppp\OOO$, we do not assume that $\min\limits_{(x,t)\in 
\ooo{\OOO}\times [0,T]} (-c(x,t))$ is sufficiently large. This is    the main 
difference between the conditions supposed in Lemma \ref{l4.2a} and in Lemma \ref{l4.2}.
The proof of Lemma \ref{l4.2a} is much simpler compared to the one of Lemma \ref{l4.2}; it will be presented at the end of this section.

Now we complete the proof of Theorem \ref{t2.3}.
Since $c(x,t) < 0$ for $(x,t)\in \QQQQ$ and $\sigma_1(x) \ge \sigma_0 > 0$
on $\ppp\OOO$ and taking into account the conditions \eqref{(4.23)} and \eqref{(4.23a)},  we can apply 
Lemma \ref{l4.2a} to the initial-boundary value problem  \eqref{(4.24)} and deduce the inequality $w_n(x,t) \ge 0,\ (x,t)\in \QQQQ$, 
that is,
$u_n(x,t) \ge v_n(x,t),\ (x,t)\in \QQQQ$ for $n\in \N$.  Due to the  relation \eqref{(6.19)}, 
we can choose
a suitable subsequence of $w_n,\ n\in \N$ and pass to the limit as $n$ tends to infinity
thus arriving at the inequality  
$u(c,\sigma_1)(x,t) \ge u(c,\sigma_2)(x,t)$ in $\QQQQ$.
The proof of Theorem \ref{t2.3} is completed.
\end{proof}

At this point, let us mention a direction for further research in connection with 
the results formulated and proved in this sections. In order to remove the 
negativity condition posed on the coefficient $c=c(x,t)$ in Theorem \ref{t2.3} 
(ii),
one needs a unique existence result for  solutions to the initial-boundary 
value problems  of type \eqref{(2.3)}
with non-zero Robin boundary condition similar to the one formulated in Theorem 
\ref{t2.1}.
There are several works that treat the case of the initial-boundary value 
problems  with non-homogeneous Dirichlet boundary conditions
(see, e.g., \cite{Ya18} and the references therein).
However, to the best of the authors' knowledge, analogous  results are not 
available for  the 
initial-boundary value problems with 
the non-homogeneous Neumann or Robin boundary conditions.  Thus, in Theorem \ref{t2.3} (ii), 
we assumed the condition 
$c(x,t)<0,\ (x,t)\in \QQQQ$, although our conjecture is that this result holds true for 
an arbitrary coefficient $c=c(x,t)$.

We conclude this section with 
a  proof of Lemma \ref{l4.2a} that  is simple because in this case we do not need the function 
$\psi$ defined as in \eqref{(4.3)}.

\begin{proof}
First we introduce an auxiliary function as follows:
$$
\www{w}(x,t):= u(x,t) + \ep(1+t^{\alpha}), \quad x\in \OOO,\,
0<t<T.
$$

The inequalities $c(x,t)<0,\ (x,t)\in \ooo{\OOO} \times [0,T]$ and $\sigma(x) \ge \sigma_0>0,\ x\in \ppp\OOO$ and the calculations similar  to the ones done in the proof of Lemma \ref{l4.2} implicate the inequalities 
$$
\ddda \www{w} + A\www{w} = F + \ep\Gamma(\alpha+1)
- c(x,t)\ep(1+t^{\alpha}) > 0 \quad \mbox{in $\OOO\times (0,T)$},
$$
$$
\ppp_{\nu_A}\www{w} + \sigma \www{w} = \ppp_{\nu_A}u + \sigma u 
+ \sigma\ep(1+t^{\alpha}) \ge \sigma_0\ep
\quad \mbox{on $\ppp\OOO \times (0,T)$}
$$
and
$$
\www{w}(x,0) = a(x) + \ep \ge \ep \quad \mbox{in $\OOO$}.
$$
Based on these inequalities,  the same arguments that were employed after the formula \eqref{(4.7)} in the proof of
Lemma \ref{l4.2} readily complete the proof of Lemma \ref{l4.2a}.
\end{proof}
%
%

\section{Appendix}
\label{sec8}
\setcounter{section}{5}
\setcounter{equation}{0}

In the proof of Lemma \ref{l4.2} that is a basis for all other derivations presented in this paper, we essentially used  an auxiliary function  that satisfies the conditions \eqref{(4.3)}. Thus, ensuring existence of such function is an important problem worth for detailed considerations. In this Appendix, we present a solution to this problem. 

For the readers' convenience, we split our existence proof  into three parts.

\vspace{0.3cm}

\noindent
I. First part of the proof.

\vspace{0.3cm}

In this part, we prove the following lemma:

\begin{lemma}
\label{lem1}
Let the conditions \eqref{(1.2)} be satisfied and the constant 
$$
M:= \min_{(x,t)\in \ooo{\OOO}\times [0,T]} b_0(x,t)>0
$$ 
is sufficiently large.

Then there exists a constant $\kappa_1>0$ such that
\begin{equation}
\label{1}
(A_1(t)v,\, v) \ge \kappa_1\Vert v\Vert_{H^1(\OOO)}^2
\end{equation} 
for all $v \in H^2(\OOO)$ satisfying 
$\ppp_{\nu_A}v + \sigma v = 0$ on $\ppp\OOO$ for each $t \in [0,T]$.
\end{lemma}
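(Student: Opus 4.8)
The plan is to establish \eqref{1} from a direct energy identity for $(A_1(t)v,v)$, using the uniform ellipticity in \eqref{(1.2)} to control the second-order part, the sign condition $\sigma\ge 0$ to discard the boundary contribution, and — crucially — the freedom to take $M$ large in order to absorb the indefinite first-order term.

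First I would multiply $A_1(t)v$ by $v$ and integrate over $\OOO$. For $v\in H^2(\OOO)$ with $\NUNU v+\sigma v=0$ on $\ppp\OOO$, integrating the principal part by parts and substituting $\NUNU v=-\sigma v$ on the boundary gives, from \eqref{(3.1a)},
\begin{equation*}
(A_1(t)v,v)=\int_{\OOO}\sumij a_{ij}(x)\ppp_iv\,\ppp_jv\,dx+\int_{\ppp\OOO}\sigma v^2\,dS-\int_{\OOO}\sum_{j=1}^d b_j(x,t)\,v\,\ppp_jv\,dx+\int_{\OOO} b_0(x,t)v^2\,dx.
\end{equation*}
By the ellipticity condition in \eqref{(1.2)} the first term is bounded below by $\kappa\Vert\nabla v\Vert^2$, and since $\sigma\ge 0$ the boundary integral is nonnegative and may be dropped. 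Setting $B:=\max_{1\le j\le d}\sup_{0\le t\le T}\Vert b_j(\cdot,t)\Vert_{C(\ooo{\OOO})}$, which is finite because $b_j\in C^1([0,T];C^1(\ooo{\OOO}))$, the Cauchy--Schwarz and Young inequalities yield, for any $\eta>0$,
\begin{equation*}
\left|\int_{\OOO}\sum_{j=1}^d b_j(x,t)\,v\,\ppp_jv\,dx\right|\le \eta\Vert\nabla v\Vert^2+\frac{dB^2}{4\eta}\Vert v\Vert^2.
\end{equation*}
Taking $\eta=\kappa/2$ and using $b_0(x,t)\ge M$ then gives
\begin{equation*}
(A_1(t)v,v)\ge\frac{\kappa}{2}\Vert\nabla v\Vert^2+\Bigl(M-\frac{dB^2}{2\kappa}\Bigr)\Vert v\Vert^2.
\end{equation*}
Once $M$ is chosen so large that $M-\frac{dB^2}{2\kappa}\ge\frac{\kappa}{2}$, the right-hand side is at least $\frac{\kappa}{2}\bigl(\Vert\nabla v\Vert^2+\Vert v\Vert^2\bigr)=\frac{\kappa}{2}\Vert v\Vert_{H^1(\OOO)}^2$, so \eqref{1} holds with $\kappa_1=\kappa/2$.

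The only genuine difficulty is the drift term $\sum_j b_j\ppp_jv$: it carries no definite sign, and the role of the hypothesis ``$M$ sufficiently large'' is precisely to swallow the $\Vert v\Vert^2$ that remains after splitting this term by Young's inequality. A secondary point, easily settled, is uniformity in $t$: the constant $B$, and hence $\kappa_1$, do not depend on $t$ because the coefficients are continuous on the compact interval $[0,T]$, so the same $\kappa_1$ works for every $t\in[0,T]$.
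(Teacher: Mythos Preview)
Your proof is correct, and it is in fact more elementary than the paper's. The paper integrates the drift term by parts as well, writing $-\int_{\OOO}\sum_j b_j(\ppp_jv)v\,dx=-\tfrac12\int_{\OOO}\sum_j b_j\ppp_j(v^2)\,dx=\tfrac12\int_{\OOO}(\mathrm{div}\,b)v^2\,dx-\tfrac12\int_{\ppp\OOO}(b\cdot\nu)v^2\,dS$. This produces an indefinite boundary term $-\tfrac12\int_{\ppp\OOO}(b\cdot\nu)v^2\,dS$ (combined with the $\sigma$-term it need not have a sign), and the paper then invokes the trace theorem $\Vert v\Vert_{L^2(\ppp\OOO)}\le C_\delta\Vert v\Vert_{H^{1/2+\delta}(\OOO)}$ together with an interpolation inequality to absorb $\Vert v\Vert_{L^2(\ppp\OOO)}^2$ into $\ep\Vert\nabla v\Vert^2+C_\ep\Vert v\Vert^2$, after which $M$ is taken large. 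By treating the drift term directly with Young's inequality in the interior you never generate this boundary term, so the trace/interpolation step is unnecessary; the price you pay is that half of $\kappa\Vert\nabla v\Vert^2$ is spent absorbing the gradient, whereas the paper keeps the full $\kappa$ on the gradient and only spends an $\ep$. Either way the conclusion is the same, with the same qualitative dependence of the required size of $M$ on $\kappa$ and the $C^1$-norms of the $b_j$.
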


In particular, Lemma \ref{lem1} implies that all of the eigenvalues of the operator $A_0$ 
defined by \eqref{(3.2)} are positive if the constant $c_0>0$ is sufficiently large.
Henceforth we employ the notation $b=(b_1,..., b_d)$.

\begin{proof}
By using the conditions \eqref{(1.2)} and the boundary condition  $\NUNU v + \sigma v = 0$ on $\ppp\OOO$, 
integration by parts yields
\begin{align*}
& (A_1(t)v,v)\\
= &-\int_{\OOO} \sum_{i,j=1}^d \ppp_i(a_{ij}(x)\ppp_jv)v dx
 - \frac{1}{2}\int_{\OOO} \sum_{j=1}^d b_j(x,t)\ppp_j(\vert v\vert^2) dx
+ \int_{\OOO} b_0(x,t) \vert v\vert^2 dx\\
=& \int_{\OOO} \sum_{i,j=1}^d a_{ij}(x)(\ppp_iv)(\ppp_jv) dx
- \int_{\ppp\OOO} (\ppp_{\nu_A}v)v dS\\
+ & \frac{1}{2}\int_{\OOO} (\mbox{div}\, b)\vert v\vert^2 dx
- \frac{1}{2}\int_{\ppp\OOO} (b\cdot \nu)\vert v\vert^2 dS
+ \int_{\OOO} b_0(x,t) \vert v\vert^2 dx\\
\ge & \kappa \int_{\OOO} \vert \nabla v\vert^2 dx 
+ \int_{\OOO} \left(\min_{(x,t)\in\ooo{\OOO}\times [0,T]} b_0(x,t)
- \frac{1}{2}\vert \mbox{div}\, b\vert \right)\vert v\vert^2 dx\\
+& \int_{\ppp\OOO} \left( \sigma - \frac{1}{2}(b\cdot \nu)\right)
\vert v\vert^2 dS
\end{align*}
\begin{equation}\label{(2)}
\ge \kappa \int_{\OOO} \vert \nabla v\vert^2 dx 
+ \left( M - \frac{1}{2}\Vert \mbox{div}\, b\Vert_{C(\ooo{\OOO}\times [0,T])}
\right)\int_{\OOO} \vert v\vert^2 dx - C\int_{\OOO} \vert v\vert^2 dS.
\end{equation}
Here and henceforth $C>0$, $C_{\ep}, C_{\delta} > 0$, etc. denote generic 
constants which are independent of the function $v$.  

By the trace theorem (Theorem 9.4 (pp. 41-42) in \cite{LM}), for
$\delta \in (0, \frac{1}{2}]$, there exists
a constant $C_{\delta}>0$ such that 
$$
\Vert v\Vert_{L^2(\ppp\OOO)} \le C_{\delta}\Vert v\Vert
_{H^{\delta+\frac{1}{2}}(\OOO)} \quad \mbox{for all
$v \in H^1(\OOO)$.}
$$
Now we fix $\delta \in \left(0, \, \frac{1}{2}\right)$.
The interpolation inequality for the Sobolev spaces implicates that for 
any $\ep>0$ there exists a constant $C_{\ep,\delta} > 0$ such that the following inequality holds true (see, e.g., Chapter IV in \cite{Ad} or Sections 2.5 and 11 of Chapter 1 in \cite{LM}):
$$
\Vert v\Vert_{H^{\delta+\frac{1}{2}}(\OOO)} 
\le \ep\Vert \nabla v\Vert_{L^2(\OOO)} + C_{\ep,\delta}\Vert v\Vert_{L^2(\OOO)}
\quad \mbox{for all $v \in H^1(\OOO)$}.
$$
Therefore, we obtain the estimate
\begin{align*}
& \Vert v\Vert^2_{L^2(\ppp\OOO)}
\le (\ep C_{\delta}\Vert \nabla v\Vert_{L^2(\OOO)}
+ C_{\delta}C_{\ep,\delta}\Vert v\Vert_{L^2(\OOO)})^2\\
\le& 2(\ep C_{\delta})^2\Vert \nabla v\Vert_{L^2(\OOO)}^2
+ 2(C_{\delta}C_{\ep,\delta})^2\Vert v\Vert_{L^2(\OOO)}^2
\end{align*}
for all $v \in H^1(\OOO)$.
Substituting this inequality into \eqref{(2)}, we obtain
\begin{align*}
&(\kappa - 2C(\ep C_{\delta})^2) \Vert \nabla v\Vert_{L^2(\OOO)}^2\\
+& ( M - \frac{1}{2}\Vert \mbox{div}\, b\Vert_{C(\ooo{\OOO}\times [0,T])}
- 2(CC_{\delta}C_{\ep,\delta})^2)\Vert v\Vert_{L^2(\OOO)}^2
\le (A_1(t)v,v).
\end{align*}
Choosing a sufficiently small $\ep > 0$  such that 
$\kappa - 2C(\ep C_{\delta})^2> 0$ and a sufficiently large $M>0$ 
such that 
$$
M > \frac{1}{2}\Vert \mbox{div}\, b\Vert_{C(\ooo{\OOO}\times [0,T])}
+ 2(CC_{\delta}C_{\ep,\delta})^2
$$
completes the proof of Lemma \ref{lem1}.
\end{proof}

\vspace{0.3cm}

\noindent
II. Second part of the proof.

\vspace{0.3cm}

Due to the estimate \eqref{1}, we can apply Theorem 3.2 (p. 137) in \cite{LU} that implicates 
existence of a constant $\theta \in (0,1)$ such that for each 
$t \in [0,T]$, a solution $\psi(\cdot,t) \in C^{2+\theta}(\ooo{\OOO})$ to the problem 
\eqref{(4.3)} exists uniquely, where $C^{2+\theta}(\ooo{\OOO})$ is the Schauder space 
defined in the proof of Lemma \ref{l6.1} in Section \ref{sec4}.

Now we introduce an auxiliary function 
\begin{equation}\label{(8.3a)}
\eta(t):= \Vert \psi(\cdot,t)\Vert_{C^{2+\theta}(\ooo{\OOO})}, \quad
0\le t \le T.
\end{equation}
Because of the inclusion $\psi(\cdot,t) \in C^{2+\theta}(\ooo{\OOO})$,
the value of the function  $\eta(t)$ is finite for each $t \in [0,T]$.


Furthermore, for an arbitrary $G \in \HOLSZ$, 
there exists a unique solution $w=w(\cdot,t)$ to the problem
\begin{equation}\label{(3)}
\left\{ \begin{array}{rl}
& A_1(t)w = G \quad \mbox{in $\OOO$}, \\
& \ppp_{\nu_A}w + \sigma w = 0 \quad \mbox{on $\ppp\OOO$}
\end{array}\right.
\end{equation}
for each $t \in [0,T]$.

Now we prove that 
for each $t\in [0,T]$ there exists a constant $C_t > 0$ such that 
\begin{equation}\label{(4)}
\Vert w(\cdot,t)\Vert_{\HOLST} \le C_t\Vert G\Vert_{\HOLSZ}
\end{equation}
for all solutions $w$ of the problem \eqref{(3)}.  In the inequality \eqref{(4)},  the constant $C_t>0$ depends on the norms 
$\Vert a_{ij}\Vert_{C^1(\ooo{\OOO})}$, $1\le i,j\le d$,
$\Vert b_k\Vert_{C([0,T];C^2(\ooo{\OOO}))}$, $0\le k\le d$ of the coefficients, but not on the coefficients by themselves.

Indeed, for each $t \in [0,T]$, the inequality
\begin{equation}\label{(5)}
  \Vert w(\cdot,t)\Vert_{\HOLST} \le C_t(\Vert G\Vert_{\HOLSZ}
+ \Vert w(\cdot,t)\Vert_{C(\ooo{\OOO})})
\end{equation}
holds true (see, e.g., the formula (3.7) on p. 137 in \cite{LU}).
To obtain the desired estimate  we have to eliminate the term  
$\Vert w(\cdot,t)\Vert_{C(\ooo{\OOO})}$
on the right-hand side of the last inequality.
This can be done by the standard compactness-uniqueness arguments.  
More precisely,
let us assume that \eqref{(4)} does not hold.  Then there exist the sequences $w_n\in \HOLST,\ n\in \N$ and  $G_n\in \HOLSZ,\ n\in \N$ such that $\Vert w_n\Vert_{\HOLST} = 1$ and 
$\lim_{n\to\infty}\Vert G_n\Vert_{\HOLSZ} = 0$.
By the Ascoli-Arzel\`a theorem,  we can extract  a subsequence $w_{k(n)}$ 
from the sequence $w_n$ such that 
$w_{k(n)} \longrightarrow \www{w}$ in $C(\ooo{\OOO})$ as $n\to \infty$.
Applying the estimation \eqref{(5)} to the equation
$$
A_1(t)(w_{k(n)} - w_{k(m)}) = G_{k(n)} - G_{k(m)} \quad
\mbox{in $\OOO$}
$$
equipped with the homogeneous boundary condition  $\ppp_{\nu_A}(w_{k(n)} - w_{k(m)}) + 
\sigma(w_{k(n)} - w_{k(m)}) = 0$ on $\ppp\OOO$, we we arrive at the relation 
\begin{align*}
&  \Vert w_{k(n)} - w_{k(m)}\Vert_{\HOLST}\\
\le & C_t(\Vert G_{k(n)} - G_{k(m)}\Vert_{\HOLSZ}
+ \Vert w_{k(n)} - w_{k(m)}\Vert_{C(\ooo{\OOO})})
\,\to 0
\end{align*}
as $n,m \to \infty$.  Hence, there exists a function $w_0 \in \HOLST$ such that 
$w_{k(n)} \to w_0$ in $\HOLST$. Moreover, we obtain the relations
$$
\Vert w_0\Vert_{\HOLST} = \lim_{n\to\infty} \Vert w_{k(n)}\Vert_{\HOLST}
= 1
$$
and $G_{k(n)} = A_1(t)w_{k(n)} \to A_1(t)w_0$ in 
$\HOLSZ$.

Since  $\lim_{n\to\infty} \Vert G_{k(n)}\Vert_{\HOLSZ} = 0$, we obtain
$A_1(t)w_0 = 0$ in $\OOO$ with $\ppp_{\nu_A}w_0 + \sigma w_0 = 0$ on 
$\ppp\OOO$.  Then Lemma \ref{lem1} yields $w_0(x,t)=0$ in $\OOO$ that 
contradicts the relation $\Vert w_0\Vert_{\HOLST} = 1$ that we established above. The obtained contradiction implicates the desired norm estimate 
\eqref{(4)}.

\vspace{0.3cm}

\noindent
III. Third part of the proof.

\vspace{0.3cm}

The last missing detail of the proof is  the inclusion $\psi \in C^1([0,T];C^2(\ooo{\OOO}))$ for the function $\psi$ constructed in the previous part of the proof. 

To show this inclusion, we first verify that for an arbitrarily but fixed $t\in [0,T]$ the function
$d(x,s):= \psi(x,t) - \psi(x,s)$   satisfies the equations
\begin{equation}\label{(6)}
\left\{ \begin{array}{rl}
& -A_1(t)d(\cdot,s) = (b_0(t) - b_0(s))\psi(\cdot,s)\\
- & \sum_{j=1}^d (b_j(t) - b_j(s))\ppp_j\psi(\cdot,s) \quad 
\mbox{in $\OOO$ for $0\le s, t \le T$},\\
& \ppp_{\nu_A}d + \sigma d = 0 \quad \mbox{on $\ppp\OOO$, $\,\,$ 
$0 \le s,t \le T$}.
\end{array}\right.
\end{equation}

For an arbitrarily but fixed $\delta>0$ we set 
$I_{\delta,t}:= [0,T] \cap \{s;\, \vert t-s\vert \le \delta\}$.

Applying the relation  \eqref{(8.3a)} and the estimate \eqref{(4)} to the solution $d$ of \eqref{(6)} yields 
\begin{align*}
& \Vert d(\cdot,s)\Vert_{\HOLST}\\
\le & C\left(\left\Vert \sum_{j=1}^d (b_j(t)-b_j(s))\ppp_j\psi(\cdot,s)
\right\Vert_{\HOLSZ}
+ \Vert (b_0(t)-b_0(s))\psi(\cdot,s)\Vert_{\HOLSZ}\right)\\
\end{align*}
\begin{equation}\label{(8.8a)}
\le C\sum_{j=0}^d \Vert b_j(s) - b_j(t)\Vert_{\CONE}\eta(s)
\le C\max_{0\le j \le d} \Vert b_j(s) - b_j(t)\Vert_{\CONE}\,
\sup_{s \in I_{\delta,t}}\eta(s).
\end{equation}

For the function
$$
h(\delta):= \max_{0\le j\le d} \sup_{\vert s-t\vert \le \delta}
\Vert b_j(s) - b_j(t)\Vert_{\CONE},
$$
the inclusions $b_j \in C([0,T];C^1(\ooo{\OOO}))$, $0\le j \le d$ imply
 $\lim_{\delta\downarrow 0} h(\delta) = 0$.

Now we rewrite the estimate \eqref{(8.8a)} in terms of the function $\eta$ defined by \eqref{(8.3a)} as 
$$
\vert \eta(s) - \eta(t)\vert \le Ch(\delta)\sup_{s\in I_{\delta,t}} \eta(s)
\quad \mbox{for $s \in I_{\delta,t}$},
$$
and thus obtain the inequality
$$
\eta(s) \le \eta(t) + Ch(\delta)\sup_{s\in I_{\delta,t}} \eta(s)
\quad \mbox{for $s \in I_{\delta,t}$}.
$$
Choosing $\delta: =\delta(t)>0$ sufficiently small,  
for a given $t \in [0,T]$, the estimate
$\sup_{s\in I_{\delta(t),t}} \eta(s) \le C_1\eta(t)$ holds true.
Varying $t \in [0,T]$, we now choose a finite number of the  intervals 
$I_{\delta(t),t}$ that cover the whole interval $[0,T]$ and thus  obtain the norm estimate 
\begin{equation}\label{(8.8)}
\Vert \psi\Vert_{L^{\infty}(0,T;C^{2+\theta}(\ooo{\OOO}))} \le 
C_2                             
\end{equation}
with some constant $C_2>0$.

For $s \in I_{\delta(t),t}$, substitution of \eqref{(8.8)} into \eqref{(8.8a)} yields the estimate
$$
\Vert d(\cdot,s)\Vert_{C^{2+\theta}(\ooo{\OOO}))}
= \Vert \psi(\cdot,t) - \psi(\cdot,s)\Vert_{C^{2+\theta}(\ooo{\OOO}))}
\le Ch(\delta)C_2.
$$
Consequently, 
$\lim_{s\to t} \Vert \psi(\cdot,s) - \psi(\cdot,t)\Vert
_{\HOLST} = 0$ and we have shown the inclusion
\begin{equation}\label{(8)}
\psi \in C([0,T];\HOLST).
\end{equation}

To complete the proof, the now verify the inclusion $\psi \in C^1([0,T];\HOLST)$. 
Since $-A_1(\xi)\psi(x,\xi) = 1$ in $\OOO$, 
differentiating this formula with respect to $\xi$
leads to the representation
\begin{equation}\label{(9)}
\sum_{j=1}^d \ppp_i(a_{ij}(x)\ppp_j\ppp_{\xi}\psi(x,\xi))
+ \sum_{j=1}^d b_j(\xi)\ppp_j\ppp_{\xi}\psi(x,\xi)
- b_0(\xi)\ppp_{\xi}\psi(x,\xi)
\end{equation}
$$
= -\sum_{j=1}^d \ppp_{\xi}b_j(\xi)\ppp_j\psi(x,\xi)
+ (\ppp_{\xi}b_0)(\xi)\psi(x,\xi) \quad \mbox{in $\OOO$}.
$$
By subtracting the equation \eqref{(9)} with 
$\xi=s$ from the one with $\xi=t$, we deduce that the function
$d_1(x,s):= (\ppp_t\psi)(x,t) - (\ppp_t\psi)(x,s)$ satisfies the relation
\begin{equation}\label{(10)}
-A_1(t)d_1(x,s)
\end{equation}
\begin{align*}
=& \biggl[ -\sum_{i,j=1}^d (b_j(t)-b_j(s))\ppp_j\ppp_s\psi(x,s)
+ (b_0(t)-b_0(s))\ppp_s\psi(x,s)\\
- & \sum_{j=1}^d (\ppp_tb_j(t)-\ppp_sb_j(s))\ppp_j\psi(x,s)
+ (\ppp_tb_0(t)-\ppp_sb_0(s))\psi(x,s)\biggr]  \\
+ &\left[ -\sum_{j=1}^d \ppp_tb_j(t)(\ppp_j\psi(x,t) - \ppp_j\psi(x,s))
 + \ppp_tb_0(t)(\psi(x,t) - \psi(x,s))\right]\\
=:& H_1(x,t,s) + H_2(x,t,s) \quad \mbox{in $\OOO$}
\end{align*}
and the boundary condition $\ppp_{\nu_A}d_1 + \sigma d_1 = 0$ on $\ppp\OOO$ for all 
$s,t \in [0,T]$.
Thus,  the inclusion 
$\psi\in C^1([0,T];\HOLST)$ will follow from the relation $\lim_{s\to t} \Vert d_1(\cdot,s)\Vert
_{\HOLST} = 0$ that we now prove. 
To this end, by applying Theorem 3.2 (p. 137) in \cite{LU} to the equation \eqref{(10)},
it suffices to prove that 
\begin{equation}\label{(11)}
\lim_{s\to t} \Vert H_{\ell}(\cdot,t,s)\Vert_{\HOLSZ} = 0, \quad
\ell=1,2. 
\end{equation}

Applying Theorem 3.2 in \cite{LU} to the equation \eqref{(9)}, in view of the 
regularity conditions \eqref{(1.2)} and the equation \eqref{(9)}, we obtain
the estimates
\begin{equation}\label{(12)}
\Vert \ppp_t\psi(\cdot,t)\Vert_{\HOLST}
\end{equation}
\begin{align*}
\le& C\left( \left\Vert \sum_{j=1}^d (\ppp_tb_j)(\cdot,t)
\ppp_j\psi(\cdot,t)\right\Vert_{\HOLSZ}
+ \Vert (\ppp_tb_0)(\cdot,t)\psi(\cdot,t)\Vert_{\HOLSZ}\right)\\
\le& C\sum_{j=0}^d \Vert b_j(\cdot,t)\Vert_{C^1([0,T];\CONE)}
\Vert \psi(\cdot,t)\Vert_{\HOLST}
\le C_3 \quad \mbox{for $0\le t\le T$.}
\end{align*}
The inequalities \eqref{(8.8)} and \eqref{(12)} lead to the norm estimate
\begin{equation}\label{(13)}
\Vert H_1(\cdot,t,s)\Vert_{\HOLSZ}
\le C_4\sum_{k=0}^1\sum_{j=0}^d
\Vert (\ppp_t^kb_j)(\cdot,t) - (\ppp_t^kb_j)(\cdot,s)\Vert
_{\CONE}.
\end{equation}
By employing the arguments similar to those used above, the estimate
\begin{equation}\label{(13a)}
\Vert H_2(\cdot,t,s)\Vert_{\HOLSZ}
\le C_4\sum_{j=0}^d \Vert \ppp_tb_j\Vert_{C([0,T];\CONE)}
\sum_{k=0}^1 \Vert \nabla^k\psi(\cdot,t) - \nabla^k\psi(\cdot,s)
\Vert_{\HOLSZ}
\end{equation}
can be derived. 
Since $\ppp_t^kb_j \in C([0,T];C^1(\ooo{\OOO}))$ for $k=0,1$ and
$0\le j \le d$ by the conditions \eqref{(1.2)} and $\nabla^k\psi \in C([0,T];C^{1+\theta}
(\ooo{\OOO}))$ with $k=0,1$ by the inclusion \eqref{(8)}, the relation $\lim_{s\to t} \Vert H_{\ell}(\cdot,t,s)\Vert_{\HOLSZ} = 0$ with
$\ell=1,2$ immediately follows from the norm estimates \eqref{(13)} and \eqref{(13a)}. As mentioned above, this completes the proof 
 of existence of a function  satisfying the conditions \eqref{(4.3)}. 




\begin{acknowledgements}
The second author was supported by Grant-in-Aid 
for Scientific Research Grant-in-Aid (A) 20H00117 of 
Japan Society for the Promotion of Science.
\end{acknowledgements}

 \section*{\small
 Conflict of interest} 

 {\small
 The authors declare that they have no conflict of interest.}




\bigskip  

\small 
\noindent
{\bf Publisher's Note}
Springer Nature remains neutral with regard to jurisdictional claims in published maps and institutional affiliations.

\end{document}